\newtheorem{defn}{Definition}
\newtheorem{mytheorem}{Theorem}
\newtheorem{prob}{Problem}
\newtheorem{example}{Example}
\newproof{proof}{Proof}
\newcommand{\bigoh}[1]{O(#1)}
\newcommand{\diag}[1]{\mathrm{diag}(#1)}
\newcommand{\grad}{\nabla}
\newcommand{\dx}{\Delta x}
\newcommand{\Real}{\mathbb{R}}
\newcommand{\myeps}{\varepsilon}
\renewcommand{\v}[1]{\boldsymbol{#1}}
\newcommand{\m}[1]{\textup{\textsf#1}}
\newcommand{\vu}{\v{u}}
\newcommand{\code}[1]{\textsf{#1}}
\newcommand{\surf}{\mathcal{S}}
\newcommand{\gradsurf}{\grad_{\!\surf}}
\newcommand{\lap}{\Delta}
\newcommand{\lapsurf}{\lap_{\surf}}
\newcommand{\x}{\v{x}}
\newcommand{\cp}{\textup{cp}}
\newcommand{\lapsharp}{\lap^{\!\#}_{\myeps}}
\newcommand{\Matlab}{{\sc Matlab}\xspace}
\newcommand{\uband}{L}
\newcommand{\mDlap}{\mathbf{\Delta}_h}
\newcommand{\unstab}[1]{\widetilde{#1}}
\newcommand{\DEstab}{\m{M}}
\newcommand{\DEunst}{\unstab{\m{M}}}
\begin{document}
\begin{frontmatter}

\title{Solving eigenvalue problems on curved surfaces using the
  Closest Point Method}

\author[ox]{Colin B.~Macdonald\corref{cor2}\fnref{fn1}}
\ead{macdonald@maths.ox.ac.uk}

\author[cims]{Jeremy Brandman\fnref{fn2}}
\ead{brandman@cims.nyu.edu}

\author[sfu]{Steven J.~Ruuth\fnref{fn3}}
\ead{sruuth@sfu.ca}

\cortext[cor2]{Principal corresponding author}

\fntext[fn1]{The work of this author was supported by an NSERC postdoctoral
  fellowship, NSF grant number CCF-0321917, and by Award No
  KUK-C1-013-04 made by King Abdullah University of Science and
  Technology (KAUST).}

\fntext[fn2]{The work of this author was supported by an NSF
  Mathematical Sciences Postdoctoral Research Fellowship.}

\fntext[fn3]{The work of this author was partially supported by a
  grant from NSERC Canada.}

\address[ox]{Mathematical Institute, University of Oxford, OX1\,3LB, UK.}

\address[cims]{Department of Mathematics, Courant Institute of Mathematical
  Sciences, New York University.}

\address[sfu]{Department of Mathematics, Simon Fraser University, Burnaby,
  British Columbia, V5A\,1S6 Canada.}

\begin{abstract}
  Eigenvalue problems are fundamental to mathematics and science.  We
  present a simple algorithm for determining eigenvalues and
  eigenfunctions of the Laplace--Beltrami operator on rather general
  curved surfaces.  Our algorithm, which is based on the Closest Point
  Method, relies on an embedding of the surface in a
  higher-dimensional space, where standard Cartesian finite difference
  and interpolation schemes can be easily applied.  We show that there
  is a one-to-one correspondence between a problem defined in the
  embedding space and the original surface problem.  For open
  surfaces, we present a simple way to impose Dirichlet and Neumann
  boundary conditions while maintaining second-order accuracy.
  Convergence studies and a series of examples demonstrate the
  effectiveness and generality of our approach.
\end{abstract}

\begin{keyword}
  eigenvalues \sep eigenfunctions \sep Laplace--Beltrami operator \sep
  Closest Point Method \sep surface computation \sep implicit surfaces
\end{keyword}

\end{frontmatter}

\section{Introduction}
\label{sec:intro}

The study of eigenvalues and eigenfunctions of the Laplacian operator
has long been a subject of interest in mathematics, physics,
engineering, computer science and other disciplines.  Of
considerable importance is the case where the underlying domain is a
curved surface, $S$, in which case the problem becomes one of finding
eigenvalues and eigenfunctions of the corresponding Laplace--Beltrami
operator
\begin{equation}
-\gradsurf \cdot \gradsurf u = \lambda u, \label{eq:eig_eq}
\end{equation}
or, more generally, the elliptic operator
\[
-\gradsurf \cdot (a(x) \gradsurf  u) = \lambda u.
\]
The Laplace--Beltrami eigenvalue problem has played a prominent role
in recent years in data analysis.  For example, in \cite{Reuter2005},
eigenvalues of the Laplace--Beltrami operator were used to extract
``fingerprints'' which characterize surfaces and solid objects.  In
\cite{Belkin,Coifman2006}, Laplace--Beltrami eigenvalues and
eigenfunctions were used for dimensionality reduction and data
representation.  Other application areas include smoothing of surfaces
\cite{Seo} and the segmentation and registration of shape
\cite{Reuter2009}.

Analytical solutions to the Laplace--Beltrami eigenvalue problem are
rarely available, so it is crucial to be able to numerically
approximate them in an accurate and efficient manner.  Partial
differential equations on surfaces, including eigenvalue problems,
have traditionally been approximated using either (a) discretizations
based on a parameterization of the surface \cite{Glowinski}, (b)
finite element discretizations on a triangulation of the surface
\cite{Reuter:2006:shapeDNA}, or (c) embedding techniques which solve
some embedding PDE in a small region near the surface
\cite{Brandman:JSC2008:eigenvalues} (see also the related works
\cite{Bertalmio:2001, XuZhao:2003:PDEsMovIntf,
  GreerBertozzi:2006:biharmonic, Greer:2006, Nemitz:Submitted,
  Dziuk:2008, LeungLowengrubZhao:2011:PDEs_solving_surf}).

Parameterization methods (a) are often effective for simple surfaces
\cite{Glowinski}, but for more complicated geometries have the
deficiency of introducing distortions and singularities into the
method through the parameterization \cite{Floater05:surface_param}.
Approaches based on the finite element method can be deceptively
difficult to implement; as described in \cite{Reuter:2006:shapeDNA},
``even though this method seems to be very simple, it is quite tricky
to implement''.  Embedding methods (c) have gained a considerable
following because they permit PDEs on surfaces to be solving using
standard finite differences.

This paper proposes a simple and effective embedding method for the
Laplace--Beltrami eigenvalue problem based on the \emph{Closest Point
  Method}.  The Closest Point Method is a recent embedding method that
has been used to compute the numerical solution to a variety of
partial differential equations
\cite{Ruuth/Merriman:jcp08:CPM,cbm:lscpm,cbm:phd,cbm:icpm}, including
in-surface heat flow, reaction-diffusion equations, and higher-order
motions involving biharmonic and ``surface diffusion'' terms.  Unlike
traditional embedding methods, which are built around level set
representatives of the surface, the Closest Point Method is built
around a closest point representation of the surface.  This allows for
general smooth surfaces with boundaries and does not require the
surface to have an inside/outside \cite{Ruuth/Merriman:jcp08:CPM}.  In
addition, the method does not introduce artificial boundary conditions
at the edge of the computation band.  Such artificial boundary
conditions typically lead to low-order accuracy \cite{Greer:2006}.

Here we apply the Closest Point Method to the problem of determining
the eigenvalues and eigenmodes of the Laplace--Beltrami operator on a
surface.  We begin by demonstrating that, for closed surfaces, there
is a one-to-one correspondence between the eigenvalues of the
embedding problem and the original surface problem.  Later, we
consider open surfaces and present simple techniques for achieving
high-order accurate approximations to Dirichlet and homogeneous
Neumann boundary conditions.  Our proposed method retains the usual
advantages of the Closest Point Method, namely generality with respect
to the surface, high-order accuracy and simplicity of implementation.

The paper unfolds as follows.  Section~\ref{sec:cpm} provides key
background on the Closest Point Method.
Section~\ref{sec:embedded_ev_probs} proposes an embedding problem used
to solve the Laplace--Beltrami eigenvalue problem and explains why a
similar embedding problem leads to spurious eigenvalues.
Section~\ref{sec:discretization} provides discretization details.  In
Section~\ref{sec:bc}, a second-order discretization of boundary
conditions is described for open surfaces.
Section~\ref{sec:numerical_results} validates the method with a number
of convergence studies and examples on complex shapes.  Finally,
Section~\ref{sec:conclusions} gives a summary and conclusions.

\section{The Closest Point Method}
\label{sec:cpm}

We now review the ideas behind the Closest Point Method
\cite{Ruuth/Merriman:jcp08:CPM} which are relevant to the problem of
finding Laplace-Beltrami eigenvalues and eigenfunctions.

The representation of the underlying surface is fundamental to any
numerical method for PDEs on surfaces. The Closest Point Method relies
on a closest point representation of the underlying surface.
\begin{defn}[Closest point function]
  Given a surface $\surf$, $\cp(\x)$ refers to a (possibly non-unique)
  point belonging to $\surf$ which is closest to $\x$.
\end{defn}

The closest point function, defined in a neighborhood of a surface,
gives a representation of the surface.  This \emph{closest point
  representation} allows for general surfaces with boundaries and does
not require the surface to have an inside/outside.  The surface can be
of any codimension \cite{Ruuth/Merriman:jcp08:CPM}, or even of mixed
codimension \cite{cbm:icpm}.

The goal of the Closest Point Method is to replace a surface PDE by a
related PDE in the embedding space which can be solved using finite
difference, finite element or other standard methods.  In the case of
the Laplace-Beltrami eigenvalue problem, this approach relies on the
following result, which states that the Laplace--Beltrami operator
$\lapsurf$ may be replaced by the standard Laplacian $\lap$ in the
embedding space $\Real^d$ under certain conditions.

\begin{mytheorem} \label{cp_principle1} Let $\surf$ be a smooth closed
  surface in $\Real^d$ and $u: \surf \to \Real$ be a smooth function.
  Assume the closest point function $\cp(\x)$ is defined in a
  neighborhood $\Omega \subset \Real^d$ of $\surf$.  Then
  \begin{align}  \label{eq:LBcp_principle}
    \lapsurf u(\x) &= \lap(u(\cp(\x)))
    \quad \text{for $\x \in \surf$.}
  \end{align}
  Note that the right-hand side is well-defined because
  $u(\cp(\cdot))$ can be evaluated at points both on and off the
  surface.
\end{mytheorem}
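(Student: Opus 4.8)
The plan is to use the fact that, on a sufficiently small neighbourhood of a smooth closed surface, the closest point map is single-valued, smooth, and --- crucially --- \emph{constant along the straight normal rays} emanating from each point of $\surf$. Consequently $v := u(\cp(\cdot))$ is the ``normal extension'' of $u$: it agrees with $u$ on $\surf$ and does not vary in directions normal to $\surf$. The theorem then reduces to the classical fact that the Euclidean Laplacian of such a normal extension, restricted to $\surf$, equals the Laplace--Beltrami operator applied to its trace.

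First I would invoke the tubular neighbourhood theorem: since $\surf$ is smooth and closed (hence compact) it has positive reach, so on a small enough neighbourhood --- which we may take to be the given $\Omega$, shrinking it if necessary --- the nearest point is unique, $\cp:\Omega\to\surf$ is smooth, and for each $p\in\surf$ one has $\cp(\x)=p$ for every $\x$ on the small normal disk through $p$. In particular, for $p\in\surf$ the ray $t\mapsto p+t\v{n}$ (with $\v{n}$ a unit normal at $p$) lies in $\Omega$ for small $|t|$ and satisfies $v(p+t\v{n})=u(p)$ for all such $t$. Because this ray is a straight line in $\Real^d$, differentiating twice gives $\dye_{\v{n}}v(p)=0$ and $(\dye_{\v{n}})^2 v(p)=\mathrm{Hess}\,v(p)(\v{n},\v{n})=0$, where $\dye_{\v{n}}$ denotes the directional derivative along $\v{n}$.

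Next I would apply the standard identity relating the two Laplacians through an arbitrary smooth extension: for any smooth $U$ defined near $\surf$ with $U|_\surf=u$, and for $\x\in\surf$,
\[
 \lapsurf u(\x) \;=\; \lap U(\x) \;-\; \kappa(\x)\,\dye_{\v{n}}U(\x) \;-\; (\dye_{\v{n}})^2 U(\x),
\]
where $\v{n}$ is a unit normal at $\x$, $\kappa$ the corresponding (appropriately signed) mean curvature, and --- when $\surf$ has codimension greater than one --- the last two terms are replaced by sums over an orthonormal frame of the normal space. Taking $U=v$ and using the two vanishing identities from the previous step, the right-hand side collapses to $\lap v(\x)=\lap(u(\cp(\x)))$, which is exactly \eqref{eq:LBcp_principle}; this right-hand side is well defined because $v=u\circ\cp$ is a genuine function on $\Omega\supseteq\surf$.

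I expect the real work to be geometric bookkeeping rather than any hard estimate. The two inputs needing justification are: (i) positive reach / the tubular neighbourhood, which is where smoothness and compactness of $\surf$ enter and which makes ``$\cp$ is constant along normal rays'' precise; and (ii) the extension identity above. If one prefers not to cite (ii), it can be derived in Fermi (normal) coordinates $(p,\v{s})$, in which the Euclidean metric is block-diagonal with respect to the tangential/normal splitting and whose tangential block reduces at $\v{s}=\v{0}$ to the metric induced on $\surf$; writing the Euclidean Laplacian in these coordinates and inserting $v=v(p)$ (independent of $\v{s}$) makes every term involving differentiation in the $\v{s}$ variables vanish, leaving precisely the coordinate expression for $\lapsurf u$ once one sets $\v{s}=\v{0}$. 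The one genuinely content-bearing point is that it is differentiation along the true normal \emph{geodesics} --- the straight rays --- and not along fixed Cartesian directions, that annihilates $v$; the curvature term $\kappa\,\dye_{\v{n}}U$ is exactly the correction that reconciles the two, and it is harmless here because $\dye_{\v{n}}v\equiv 0$. I would also remark that this is the rigorous underpinning of the method's guiding heuristic: ``replace $\gradsurf$ by $\grad$, and $\gradsurf\dotprod$ by $\grad\dotprod$, acting on closest point extensions''.
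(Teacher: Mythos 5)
Your proof is correct, but it follows a genuinely different route from the paper, which offers no argument of its own: the theorem is dispatched in one sentence by citing ``the principles'' of Ruuth and Merriman. That cited route is modular --- the \emph{gradient principle} (a function constant in the normal direction satisfies $\grad v=\gradsurf u$ on $\surf$) followed by the \emph{divergence principle} (a vector field tangent to $\surf$ and to all nearby surfaces displaced along the normals has Euclidean divergence equal to surface divergence on $\surf$), applied in succession to $v=u\circ\cp$, give $\lap v=\gradsurf\dotprod\gradsurf u=\lapsurf u$ on $\surf$. You instead work from the pointwise splitting of the ambient Laplacian into the tangential trace of the Hessian plus mean-curvature and second-normal-derivative corrections, and kill the correction terms using the fact that $\cp$ --- hence $v$ --- is constant on the normal disks furnished by the tubular neighbourhood theorem. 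Both are sound; yours is more self-contained and makes visible exactly where curvature would enter and why it cancels, while the paper's avoids ever writing down the second fundamental form at the cost of leaning entirely on the cited reference. Two small points if you flesh yours out: (i) state the decomposition identity so that the tangential trace of $D^2U$ equals $\lapsurf(U|_{\surf})$ plus a term depending only on the \emph{normal} component of $\grad U$, making it manifest that $\partial_{\v{n}}v=0$ alone disposes of it; and (ii) in codimension greater than one, note that only the pure (not mixed) normal second derivatives enter the trace, which your observation that $v$ is constant on the whole normal disk covers in any case.
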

This result follows from the principles in
\cite{Ruuth/Merriman:jcp08:CPM}.

Because the function $u(\cp(\x))$, known as the closest point
extension of $u$, is used throughout this paper, we make the following
definition.

\begin{defn}[Closest point extension]
  Let $\surf$ be a smooth surface in $\Real^d$.  The \emph{closest
    point extension} of a function $u: \surf \to \Real$ to a
  neighborhood $\Omega$ of $\surf$ is the function $v:\Omega \to
  \Real$ defined by
  \begin{align}
    v(\x) = u(\cp(\x)).
  \end{align}
\end{defn}

\section{The embedded eigenfunction problem}
\label{sec:embedded_ev_probs}

Our objective is to develop a simple, effective method for solving the
following surface eigenvalue problem:
\begin{prob}[Laplace--Beltrami eigenvalue problem]  \label{ev_prob}
   Given a surface $\surf$, determine the eigenfunctions $u :
  \surf \to \Real$ and eigenvalues $\lambda$ satisfying
  \begin{align}  \label{eq:LBev}
    -\lapsurf(u(\x)) = \lambda u(\x), \quad \text{for $\x \in \surf$.}
  \end{align}
\end{prob}
If the surface $\surf$ is open, then the problem will also have
boundary conditions: we address this in Section~\ref{sec:bc}.

In this section, we assume that a closest point representation of the
surface is available and consider two associated embedding problems.
We will see that the first, a direct extension of \eqref{eq:LBcp_principle}, leads to an ill-posed problem.  However a
straightforward modification of this problem leads to our second embedding problem,
which we show is equivalent to \eqref{eq:LBev}.

\subsection{A first try}
\label{sec:illposed_ev_prob}
We first consider the following embedded eigenvalue problem, which is
directly motivated by \eqref{eq:LBcp_principle}.
\begin{prob}[Ill-posed embedded eigenvalue problem] \label{prob:illposed_ev_prob}
  Determine the eigenfunctions $v : \Omega \subset \Real^d \to
  \Real$ and eigenvalues $\lambda$ satisfying
  \begin{align}  \label{eq:evCP}
    -\lap(v(\cp(\x))) = \lambda v(\x),
  \end{align}
  in a neighborhood $\Omega \subset \Real^d$ of $\surf$.
\end{prob}

Solutions to Problem~\ref{ev_prob} and
Problem~\ref{prob:illposed_ev_prob} are closely related.  Every
solution to the embedding problem, restricted to the surface, is a
solution to the surface problem.  Conversely, except for the $\lambda
= 0$ case, every surface eigenfunction corresponds to a unique
solution of Problem~\ref{prob:illposed_ev_prob}.  These results are
established in \ref{app:unst_case_thms}.

Notably, the one-to-one correspondence between solutions breaks down
for the $\lambda = 0$ case (the null-eigenspace).  Not only is this
case significant in theory, in practice it makes
Problem~\ref{prob:illposed_ev_prob} ill-posed.

\subsubsection{The null-eigenspace}
\label{sec:nulleigenspace}

The constant eigenfunction $u(\x) = c$ and $\lambda=0$ is a solution
to \eqref{eq:LBev}.  Now consider a function on $\Omega$ which agrees
with $u$ on the surface (but differs off the surface)
\begin{align*}
  \text{$v : \Omega \to \Real$,  such that $v(\x) = c$ for $\x \in \surf$,}
\end{align*}
and note that $v(\x)$ is a null-eigenfunction of \eqref{eq:evCP}.
Because $v(\x)$ is arbitrary for $\x \in \Omega \backslash \surf$, the
set of null-eigenfunctions for \eqref{eq:evCP} is much larger than the
set of null-eigenfunctions for \eqref{eq:LBev}.  In fact, the set of
null-eigenfunctions for \eqref{eq:evCP} is infinite-dimensional: any
(linearly independent) change off the surface gives a new linearly
independent eigenfunction.  For points off of the surface, these
null-eigenfunctions need not even be smooth.

This example demonstrates the essential flaw of
Problem~\ref{prob:illposed_ev_prob}: when $\lambda = 0$, only the
surface values of eigenfunctions are determined by \eqref{eq:evCP}
(i.e., eigenfunctions can take on arbitrary values elsewhere).  Not
surprisingly, the infinite-dimensionality of the null-eigenspace
causes problems for numerical methods based on approximating
\eqref{eq:evCP}. For example, a large number of eigenfunctions with
near-zero eigenvalues are observed in the numerical experiment of
Figure~\ref{fig:spectrum_hist_stab_unstab} in
Section~\ref{sec:stabCPM} below.

\subsection{The fix: a modified embedded eigenvalue problem}  \label{sec:thefix}

To avoid the null-eigenspace found in Problem 2,
we consider a modified embedded eigenvalue problem.  Our approach uses the split operator introduced in \cite{cbm:icpm}:
\begin{defn}[Operator $\lapsharp$] Given $\Omega \subset \Real^d$
  containing a surface $\surf$ and a function $v:\Omega \to \Real$,
  the operator $\lapsharp$ is defined as
  \begin{align}  \label{lapSharpDefn}
    \lapsharp v(\x) := \lap(v(\cp(\x)))
    - \frac{2d}{\myeps^2}
    \Big[ v(\x) - v(\cp(\x)) \Big].
  \end{align}
  where $0<\myeps \ll 1$.
\end{defn}
The factor $2d$ (twice the dimension of the embedding space $\Omega$)
is for later notational convenience.
We can view $\lapsharp v$ as $\lap(v(\cp))$ plus a penalty for large
change in the normal direction: specifically $\lapsharp v$ will be
large if $|v(\x) - v(\cp(\x))|$ is not $\bigoh{\myeps^2}$.  Using this
operator, we pose another embedded eigenvalue problem:
\begin{prob}[Regularized embedded eigenvalue problem]  \label{emb_ev_prob_fix}
 Determine all eigenfunctions $v : \Omega \subset \Real^d \to
  \Real$ and eigenvalues $\lambda$ satisfying
  \begin{align}  \label{eq:LBevstab}
    -\lapsharp v(\x) = \lambda v(\x),
  \end{align}
  in an embedding space $\Omega \subset \Real^d$ containing the
  surface $\surf$.
\end{prob}

For eigenvalues $\lambda < \frac{2d}{\myeps^2}$, we can show a
one-to-one correspondence between Problem~\ref{ev_prob} and
Problem~\ref{emb_ev_prob_fix}.

\begin{mytheorem}[Equivalence of two eigenvalue problems]
  \label{th:discLapSharpEquiv}
  Suppose $\surf$ is a smooth surface embedded in $\Real^d$ and that
  $\Omega \subset \Real^d$ is a neighborhood of the surface.
  Then, for every eigenfunction $u:C^2(\surf) \to \Real$ of \eqref{eq:LBev} with eigenvalue
  $\lambda < \frac{2d}{\myeps^2}$, there exists a unique
  eigenfunction $v : \Omega \to \Real$ of \eqref{eq:LBevstab} with eigenvalue $\lambda$ which agrees with
 $u$ on $\surf$.
  The eigenfunction $v$ is given by
  \begin{align}  \label{embeddedEigenfunc}
    v(\x) = \frac{\lap(u(\cp(\x))) +
      \frac{2d}{\myeps^2} u(\cp(\x))}{-\lambda + \frac{2d}{\myeps^2}}.
  \end{align}
  Conversely, for every eigenfunction $v(\x)$ of
  \eqref{eq:LBevstab} with eigenvalue $\lambda$, the restriction of $v(x)$ to $\surf$ is an eigenfunction of \eqref{eq:LBev} with
  eigenvalue $\lambda$.
\end{mytheorem}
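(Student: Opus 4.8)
The plan is to verify the explicit formula \eqref{embeddedEigenfunc} by direct computation, relying on two elementary facts. The first is that the closest point map fixes the surface, $\cp(\x)=\x$ for $\x\in\surf$, and is therefore idempotent: $\cp(\cp(\x))=\cp(\x)$ for every $\x\in\Omega$. The second is Theorem~\ref{cp_principle1}, which gives $\lap\bigl(w(\cp(\x))\bigr)=\lapsurf w(\x)$ for $\x\in\surf$ and any sufficiently smooth $w:\surf\to\Real$. Note also that since $0<\myeps\ll 1$, the hypothesis $\lambda<\frac{2d}{\myeps^2}$ makes the denominator $-\lambda+\frac{2d}{\myeps^2}$ in \eqref{embeddedEigenfunc} strictly positive, so $v$ is well defined.

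For the forward direction, let $u$ be an eigenfunction of \eqref{eq:LBev} with $\lambda<\frac{2d}{\myeps^2}$ and define $v$ by \eqref{embeddedEigenfunc}. First I would restrict $v$ to $\surf$: for $\x\in\surf$ we have $\cp(\x)=\x$ and, by Theorem~\ref{cp_principle1}, $\lap(u(\cp(\x)))=\lapsurf u(\x)=-\lambda u(\x)$; the numerator of \eqref{embeddedEigenfunc} then collapses to $\bigl(-\lambda+\frac{2d}{\myeps^2}\bigr)u(\x)$, giving $v|_{\surf}=u$. Next I would identify the closest point extension of $v$: substituting $\cp(\x)$ for $\x$ in \eqref{embeddedEigenfunc} and using $\cp(\cp(\x))=\cp(\x)\in\surf$ together with Theorem~\ref{cp_principle1} applied at the surface point $\cp(\x)$, the same cancellation gives $v(\cp(\x))=u(\cp(\x))$ for all $\x\in\Omega$. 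Consequently $\lap(v(\cp(\x)))=\lap(u(\cp(\x)))$ and $v(\x)-v(\cp(\x))=v(\x)-u(\cp(\x))$, so
\[
  \lapsharp v(\x)=\lap(u(\cp(\x)))-\frac{2d}{\myeps^2}\bigl[v(\x)-u(\cp(\x))\bigr].
\]
Finally, solving \eqref{embeddedEigenfunc} for $\lap(u(\cp(\x)))=\bigl(-\lambda+\frac{2d}{\myeps^2}\bigr)v(\x)-\frac{2d}{\myeps^2}u(\cp(\x))$ and substituting, the $u(\cp(\x))$ terms cancel and one is left with $\lapsharp v(\x)=-\lambda v(\x)$, as desired. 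Uniqueness follows along the same lines: if $v_1,v_2$ both satisfy \eqref{eq:LBevstab} with eigenvalue $\lambda$ and agree with $u$ on $\surf$, then $w:=v_1-v_2$ vanishes on $\surf$, hence $w(\cp(\x))\equiv 0$, so $\lap(w(\cp(\x)))\equiv 0$ and $\lapsharp w=-\frac{2d}{\myeps^2}w$; the eigenvalue equation then forces $\bigl(\frac{2d}{\myeps^2}-\lambda\bigr)w\equiv 0$, and $w\equiv 0$ because $\lambda<\frac{2d}{\myeps^2}$.

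For the converse, let $v$ satisfy \eqref{eq:LBevstab} with eigenvalue $\lambda$ and set $u:=v|_{\surf}$. For $\x\in\surf$ the penalty term $v(\x)-v(\cp(\x))$ vanishes since $\cp(\x)=\x$, so $\lapsharp v(\x)=\lap(v(\cp(\x)))=\lapsurf u(\x)$ by Theorem~\ref{cp_principle1}; combined with $-\lapsharp v(\x)=\lambda v(\x)$ this yields $-\lapsurf u=\lambda u$ on $\surf$. (That $u\not\equiv 0$ can be checked separately: if $v$ vanished on $\surf$ then, as in the uniqueness argument, $\lapsharp v=-\frac{2d}{\myeps^2}v$, which would force $\lambda=\frac{2d}{\myeps^2}$.)

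I expect the crux of the argument to be the second step of the forward direction, namely recognizing that the closest point extension of the constructed $v$ equals that of $u$, since this is exactly what makes $\lapsharp v$ telescope to a multiple of $v$. It is here that the idempotency $\cp\circ\cp=\cp$ and the threshold $\lambda<\frac{2d}{\myeps^2}$ (needed both to define $v$ via \eqref{embeddedEigenfunc} and to exclude spurious solutions) enter. Regularity is only a minor concern: since $\surf$ is smooth, $\cp$ is smooth in a neighborhood of $\surf$, so $u\in C^2(\surf)$ implies $u(\cp(\cdot))\in C^2(\Omega)$ and every Laplacian above is a classical one.
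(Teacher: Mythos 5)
Your proposal is correct and takes essentially the same route as the paper's (much terser) proof: in both, the key point is that requiring $v$ to agree with $u$ on $\surf$ forces $v(\cp(\x)) = u(\cp(\x))$, which turns \eqref{eq:LBevstab} into a pointwise linear equation for $v(\x)$ that is uniquely solvable exactly when $\lambda \neq \frac{2d}{\myeps^2}$, yielding \eqref{embeddedEigenfunc}; the converse is the same restriction-to-$\surf$ argument via Theorem~\ref{cp_principle1}. Your version simply verifies the formula and argues uniqueness explicitly rather than deriving the formula by solving, which is a presentational rather than substantive difference.
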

\begin{proof}
Existence and uniqueness of $v$ follow directly from the condition that $v$ agrees with $u$ on $\surf$ and the definition of $\lapsharp v$.
The converse follows by choosing $x \in \surf$ in \eqref{eq:LBevstab} and applying Theorem~\ref{cp_principle1}.
\end{proof}
\paragraph{Remark 1}
Note that when $\lambda = 0$ in \eqref{eq:LBevstab}, $v$ is 
determined in $\Omega$ by its values on the surface,
avoiding the null-eigenspace problem of \eqref{eq:evCP}.

\paragraph{Remark 2}
If one tries to compute the eigenvalues $\lambda \ge
\frac{2d}{\myeps^2}$ of Problem~\ref{emb_ev_prob_fix}, one encounters
ill-posedness similar to the ill-posed embedded eigenvalue problem
\eqref{eq:evCP}.  This is confirmed by the numerical results in
Section~\ref{sec:high_freq_ews}.  The shift of the null-space by
$\frac{2d}{\myeps^2}$ is due to the additional term
$\frac{2d}{\myeps^2}\big( v(\x) - v(\cp(\x)) \big)$ in
Problem~\ref{emb_ev_prob_fix}.  As we shall see in the next section
and Section~\ref{sec:high_freq_ews}, this ill-posedness will not be
an issue for practical purposes since the parameter $\epsilon$ can
be chosen proportional to the grid spacing.

\begin{example}  \label{example_DEstab_circle}
Let $\surf$ be a circle of radius $R$ embedded in 2D and
consider the eigenfunction $u=\cos(\sqrt{\lambda}R\theta)$ with
eigenvalue $\lambda$.  Applying Theorem~\ref{th:discLapSharpEquiv}, we
get
\begin{align*}
  v(\x) = \left(
    \frac{\lambda \myeps^2 \left(1 - R^2/r^2\right)}{-\lambda \myeps^2 + 4} + 1
  \right) \cos(\sqrt{\lambda}R\theta),
\end{align*}
and we note that indeed $v(\x) = u(\x)$ on the surface.
\end{example}

\subsubsection{Discretizing the regularized operator: choice of $\myeps$}  \label{sec:choosing_eps}

In this work, we choose $\myeps = \Delta x$, where $\Delta x$ is the
underlying grid spacing.  In two dimensions, with this choice of
$\myeps$ and a centered five-point discretization of the Laplacian
operator, we begin discretizing the regularized operator
\eqref{lapSharpDefn} to obtain
\begin{multline}  \label{lapSharpDefn_discrete}
  \lapsharp u(x,y)  \approx \frac{1}{\dx^2}
  \big[ -4 u(x,y) +  u(\cp(x+\dx,y)) + u(\cp((x-\dx,y)) \,+ \\ u(\cp(x,y+\dx)) + u(\cp(x,y-\dx)) \big],
\end{multline}
and analogously in higher dimensions.  Note that by choosing $\myeps =
\Delta x$, the resulting diagonal term (i.e., the ``center'' of the
finite difference stencil) is without a closest point extension: where
diagonal terms appear in the discretization, consistency does not
require an interpolation to the closest point on the surface. See
\cite{cbm:icpm} for details.

While \eqref{lapSharpDefn_discrete} is an approximation to the
regularized operator \eqref{lapSharpDefn}, it is not completely
discrete since, for example, $\cp(x+\dx,y)$ is generally not a grid
point.  In Section~\ref{sec:stabCPM} we will complete this
discretization and solve the resulting eigenvalue problem.

\section{Numerical Discretization}  \label{sec:discretization}

In this section, we discretize $\lapsharp u$ following the approach
given in \cite{cbm:icpm}.  The eigenvalues of the resulting matrix,
computed using standard techniques, approximate the eigenvalues of
\eqref{eq:LBev}.

Following \cite{cbm:icpm}, our computational domain is a narrow band
of $m$ grid points $\uband = \{\x_1, \x_2, \ldots, \x_m\}$ enveloping
the surface $\surf$.  We approximate a function $u$ at all points in
$\uband$ as the vector $\vu \in \Real^m$ where $u_i \approx u(\x_i)$
for each $\x_i \in \uband$.

\subsection{Discrete closest point extensions}
To choose the list of points $\uband$, we consider the problem of
evaluating $u(\cp(\x_j))$ for a particular point $\x_j$.  As shown in
Figure~\ref{fig:cpext}, $\cp(\x_j)$ is not typically a grid point, so
we approximate the value of $u(\cp(\x_j))$ using interpolation of the
values of $u$ at neighboring grid points.  This interpolation has a
certain stencil associated with it \cite{cbm:icpm} and we choose
$\uband$ such that it contains the union of all the required
interpolation stencils (an algorithm for doing this is given in
\cite{cbm:icpm}).
\begin{SCfigure}[2]
  \begin{tikzpicture}[scale=0.5]
    \filldraw[blue!20!white] (0.85,0.85) rectangle (4.15,4.15);
    \foreach \p in {
            (1,0),(2,0),(3,0),(4,0),(5,0),(6,0),
            (1,1),(2,1),(3,1),(4,1),(5,1),(6,1),
      (0,2),(1,2),(2,2),(3,2),(4,2),(5,2),(6,2),
      (0,3),(1,3),(2,3),(3,3),(4,3),(5,3),(6,3),
      (0,4),(1,4),(2,4),(3,4),(4,4),(5,4),
      (0,5),(1,5),(2,5),(3,5),(4,5),(5,5),
      (0,6),(1,6),(2,6),(3,6)}
      \filldraw \p circle (1.5pt);

    \begin{scope}
      \clip (-0.3,-0.2) rectangle (4,4.8);
      \draw (-1.3,0.3) circle (4.5);
    \end{scope}

    \draw (5,4) node[below right=-2pt] {\small $\x$};

    \draw (-1.3,0.3)+(28:4.5) -- (5,4);
    \filldraw (-1.3,0.3)+(28:4.5) circle (5pt);

    \begin{scope}[label distance=-4pt]
      \node [label=180:{\small $\cp(\x)$}] at ($ (-1.3,0.3)+(28:4.5) $) {};
    \end{scope}
  \end{tikzpicture}
  \caption{A discrete closest point extension being applied to extend
    $u(\cp(\x))$ using degree $p=3$ (bicubic) interpolation.  The
    shaded region indicates the interpolation stencil for $\cp(\x)$.}
  \label{fig:cpext}
\end{SCfigure}
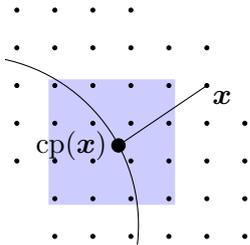

In this work, we use Barycentric Lagrange polynomial interpolation
\cite{Trefethen:barycentric04} which is linear in the values of $\vu$.
This allows us to express the closest point extension to any chosen set of
points as a multiplication by an \emph{extension matrix} $\m{E}$,
where each row of $\m{E}$ represents one discrete closest point extension.

\subsection{Discretizing the Laplace operator}

Combining the interpolation step, given by the matrix $\m{E}$, with a
standard, symmetric finite difference discretization of $\lap$ (e.g.,
the standard five-point stencil in 2D or the standard nine-point
stencil in 3D), yields a discretization of $\lap \left( u
  (\cp(\uband)) \right)$:

\begin{align*}
  \lap \left( u (\cp(\uband)) \right)  \approx
  \mDlap \m{E} \vu  =
  \raisebox{-0.98cm}{\includegraphics{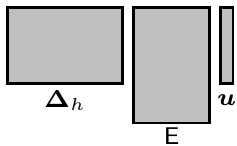}}
  =
  \raisebox{-0.38cm}{\includegraphics{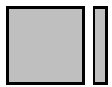}}
  = \DEunst \vu,
\end{align*}
where $\DEunst = \mDlap\m{E}$ is an $m \times m$ matrix intended to
approximate the Laplace--Beltrami operator.

However, the matrix $\DEunst$ is based on discretizing
$\lap(u(\cp(\cdot)))$ and we know the latter cannot capture the
eigenmodes of $\lapsurf$ correctly because of the issue of the
null-eigenspace discussed in Section~\ref{sec:nulleigenspace}.  Thus, it is not surprising to find that
the discrete operator $\DEunst$ also
does a poor job of approximating the Laplace--Beltrami operator.  For
example, Figure~\ref{fig:spectrum_hist_stab_unstab} shows that
$\DEunst$ has many eigenvalues close to zero, including some with
imaginary parts and negative real parts.
Notably, the matrix  $\DEunst$ is also ill-suited for time-dependent
calculations; for example, \cite{cbm:icpm,cbm:phd} show that
implicit methods built on the matrix $\DEunst$ have very strict
stability time-stepping restrictions and are not competitive with explicit schemes.

\subsection{A modified discretization}  \label{sec:stabCPM}

We will approximate the operator $\lapsharp$ using a matrix $\DEstab$
as described next.  This approach will yield a
convergent algorithm for the eigenfunctions and eigenvalues of
the Laplace--Beltrami operator $\lapsurf$.

Approximating $\lapsharp$ is straightforward and follows from the
definition \eqref{lapSharpDefn}.  When approximating $\lapsurf$ at the
point $\x_i$ with a finite difference scheme, we map only the
neighboring points $\x_j$ of the stencil back to their closest points
$\cp(\x_j)$ and use $u_i$ itself instead of $u(\cp(\x_i))$
\cite{cbm:icpm}.
This special treatment of the diagonal elements (which corresponds to
$\myeps=\Delta x$) yields a new $m \times m$ matrix \cite{cbm:icpm}
\begin{align*}
  \DEstab
  := \diag{\mDlap} + (\mDlap - \diag{\mDlap}) \m{E}
  = \raisebox{-1.05cm}{\includegraphics{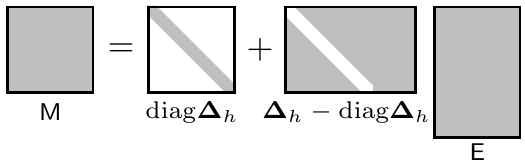}}\,.
\end{align*}
Figure~\ref{fig:spectrum_hist_stab_unstab} gives the results of an
experiment which contrasts the behavior of $\DEstab$ and $\DEunst$.
We find that the spectrum of $\DEstab$ matches that of $\lapsurf$ on a
semi-circular domain in $\Real^2$, whereas the spectrum of $\DEunst$
does not.

\begin{figure}
  \centering{%
    \includegraphics{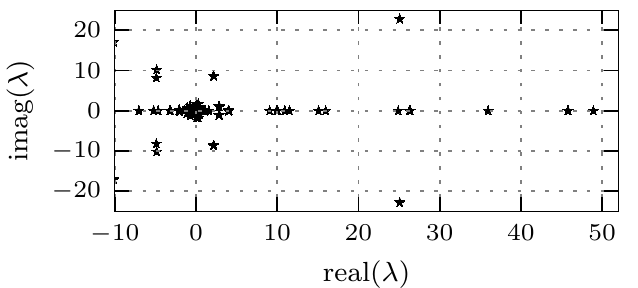}%
    \includegraphics{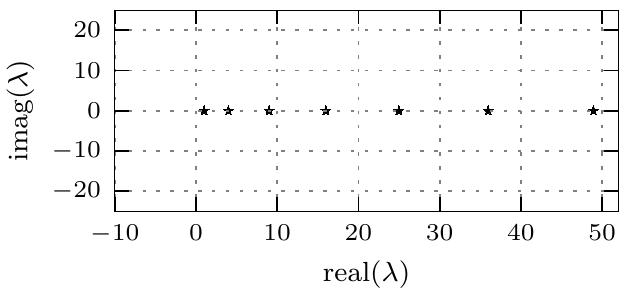}\\
    \includegraphics{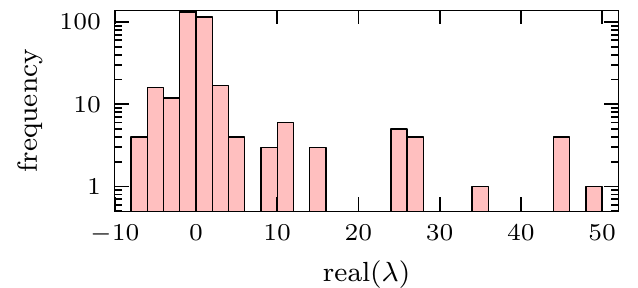}%
    \includegraphics{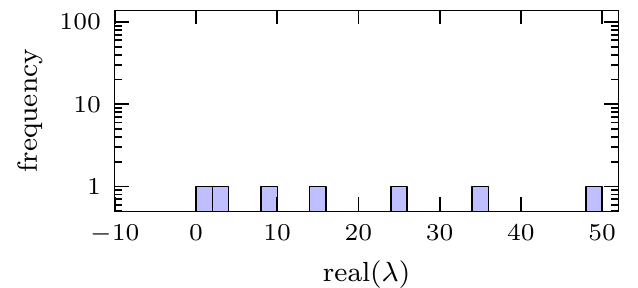}%
  }
  \caption{Eigenvalues and histograms of their distribution for the
    matrices $\DEunst$ (left) and $\DEstab$ (right).  The geometry is
    a unit semi-circle in 2D with homogeneous Dirichlet boundary
    conditions, $\m{E}$ with bicubic interpolation ($p=3$), and a mesh
    spacing of $\dx = \frac{1}{32}$.  Note the large number of
    eigenvalues near zero for $\DEunst$, whereas $\DEstab$ correctly
    captures the spectrum of $1,4,9,16,25,\ldots$.}
  \label{fig:spectrum_hist_stab_unstab}
\end{figure}

\paragraph{Remark}
The computational band $\uband$ must be chosen so that it contains the
interpolation stencil for $\cp(\x)$ for all $\x$ needed in the
calculation.  Figure~\ref{fig:egg} shows the computational grid for an
egg-shaped curve $\surf$ illustrating the list of grid points
$\uband$.  An algorithm for the construction of the list $\uband$ is
given in \cite{cbm:icpm}.

\begin{figure}
  \centering{%
    \includegraphics[width=0.36\textwidth]{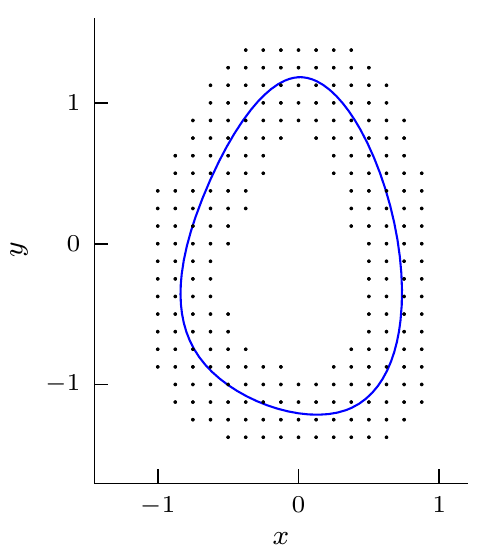}%
    \includegraphics[width=0.36\textwidth]{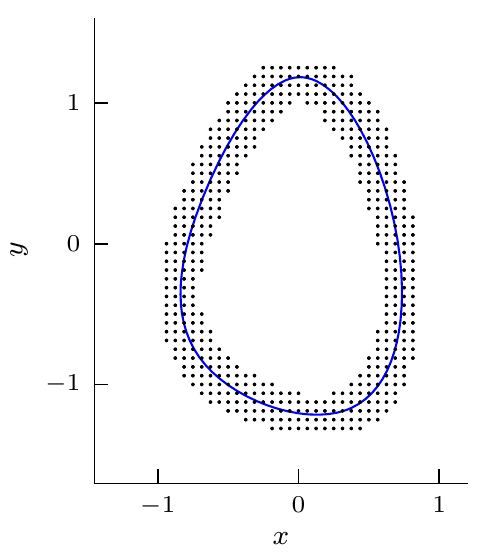}%
  }
  \caption{Computational grids for the Closest Point Method for an
    egg-shaped surface $\surf$ with $\dx=0.125$ (left) and
    $\dx=0.0625$ (right).  Second-order finite differences and degree
    $p=3$ interpolation.}
  \label{fig:egg}
\end{figure}

\subsection{Computing eigenvalues and eigenmodes}

Given the discrete operator $\DEstab$ which approximates the
Laplace--Beltrami operator on a surface $\surf$, we compute a spectral decomposition
\begin{align}
  \DEstab = \m{Q} \Lambda \m{Q}^{-1},
\end{align}
where $\diag{\Lambda}$ are the eigenvalues of $\DEstab$ and the
columns of $\m{Q}$ are the eigenvectors.  These eigenvalues and
eigenvectors are the respective approximations of the eigenvalues and
eigenfunctions of the Laplace--Beltrami operator on $\surf$.

\subsubsection{Implementation}

The final step of our algorithm---computing the spectral decomposition
of a matrix $\DEstab$---is a well-known problem in numerical linear
algebra (e.g., \cite{GolubvanLoan:matrix, Trefethen:numlinalg:1997}).
For example, in \Matlab we can use the function \code{eig()} to
compute the complete decomposition or the function \code{eigs()} to
compute only part of the spectrum.  \Matlab's \code{eig()} calls
LAPACK routines \cite{LAPACK} and \code{eigs()} calls ARPACK
\cite{ARPACKUserGuide} which makes use of the sparsity of the matrix
$\DEstab$ (in fact, it only requires a function which returns a
matrix-vector product, although in this work we explicitly form the
matrix $\DEstab$).  Many of our calculations are performed in Python
using SciPy \cite{scipy} and NumPy \cite{Oliphant:2007:numpy} where we
also make use of ARPACK via \code{scipy.sparse.linalg.eigen.arpack}.

The eigenfunctions are returned as vectors over the list of points
$\uband$.  In practice, the interpolation scheme from the closest
point extension can be reused to evaluate the eigenfunctions at any
desired points on the surface for plotting or other purposes.

\subsubsection{Degree of interpolation}
For the Closest Point Method to achieve the full order of accuracy of
the underlying finite difference scheme (say $q$), the degree of
interpolation $p$ must be chosen large enough so that the interpolant
itself can be differentiated sufficiently accurately.  For the
second-order Laplace--Beltrami problems in this work, we need $p \ge q
+ 1$ \cite{Ruuth/Merriman:jcp08:CPM,cbm:icpm}.

\section{Boundary conditions}
\label{sec:bc}

When applied to an open surface, the Closest Point Method propagates
boundary values into the embedding space along directions normal to
the boundary, yielding homogeneous Neumann boundary conditions
\cite{Ruuth/Merriman:jcp08:CPM}.  An analogous method for Dirichlet
boundary conditions is similarly straightforward: instead of
propagating out the interpolated values at boundary points the
prescribed boundary conditions are propagated out
\cite{Ruuth/Merriman:jcp08:CPM}.  While these methods are simple, they
are only first-order accurate, which is lower-order than the
discretizations of the Laplace--Beltrami operator discussed in this
paper.  Fortunately, a simple modification of the closest point
function can be introduced to obtain a second-order accurate
discretization for boundary conditions of Neumann or Dirichlet type.

Assume a smooth surface, and consider the following modification of
the closest point function,
\begin{equation}   \label{eq:cpbar}
  \bar{\cp}(\x) := \cp\big(\x + 2(\cp(\x) - \x)\big)
   = \cp\big(2\cp(\x) - \x\big).
\end{equation}
As is illustrated in Figure~\ref{fig:cpbar}, whenever $\cp(\x)$ is a
point in the interior of the surface, the line between $2\cp(\x)-\x$
and $\cp(\x)$ is orthogonal to the surface.  This implies that
$\bar{\cp}(\x) = \cp(2\cp(\x)-\x) = \cp(\x)$, at least in a neighborhood
of the surface.
Conversely, if $\bar{\cp}(\x_g) \neq
\cp(\x_g)$ for a point $\x_g$ then $\bar{\cp}(\x_g)$ is an interior
point of the surface\footnote{At least for $\x_g$ in a neighborhood
  of a sufficiently well-behaved surface: for example, for $\x_g$ far
  from one boundary of a curve segment, $\cp(2\cp(\x_g) - \x_g)$ might
  be another boundary point instead of an interior point.}  and $\cp(\x_g)$ is
a boundary point.  For a straight line or a planar surface,
$u(\bar{\cp}(\x_g))$ gives the mirror value for $\x_g$, while for a
general, curved surface it gives an approximate mirror value.  In
correspondence to the terminology for codimension-zero regions with
boundaries, we call a point $\x_g$ a \emph{ghost point} if $\cp(\x_g)
\neq \bar{\cp}(\x_g)$ (and note this terminology differs slightly from
\cite{cbm:icpm}).

\begin{SCfigure}
  \centering{
    \includegraphics[width=0.54\textwidth]{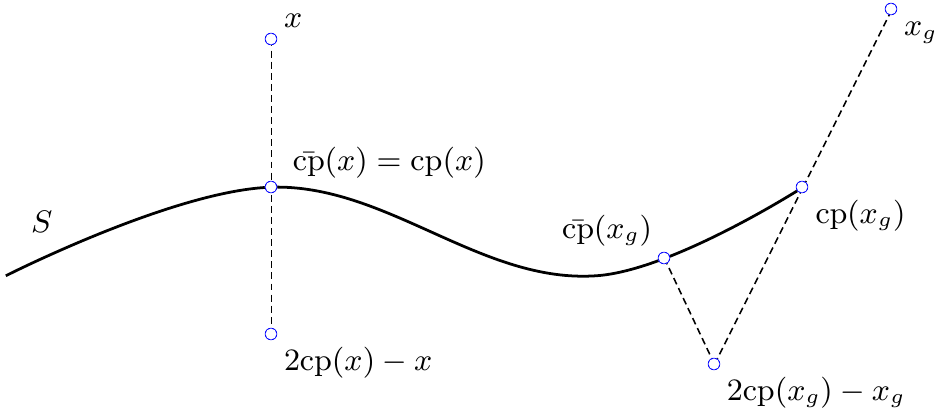}%
  }
  \caption{The function $\bar{\cp}$.  For the point $\x$, the closest
    point $\cp(\x)$ lies in the interior of the surface and for such a
    point, $\bar{\cp}(\x) = \cp(\x)$.  However, for $\x_g$, the
    closest point $\cp(\x_g)$ lies on the boundary of the surface and
    $\bar{\cp}(\x_g)$ lies in the interior, with $\bar{\cp}(\x_g)$ and
    $\x_g$ roughly equidistant from $\cp(\x_g)$.}
  \label{fig:cpbar}
\end{SCfigure}

Thus, replacing $\cp(\x)$ by $\bar{\cp}(\x)$ in the Closest Point Method
does not change the treatment of interior points.  At ghost points, however,
(approximate) mirror values are used.  This yields a second-order approximation of homogeneous Neumann conditions;
no other modification of the method is required.  Homogeneous
Dirichlet boundary conditions are obtained by extending the function at ghost points by $-u(\bar{\cp}(\x_g))$.  By
analogy to second-order boundary conditions for codimension-zero
regions, we observe that an approximation of non-homogeneous Dirichlet
conditions is obtained by extending at ghost points by $u(\x_g) = 2 u(\cp(\x_g))
-u(\bar{\cp}(\x_g))$, where $u(\cp(\x_g))$ is a
prescribed value on the boundary.

We emphasize that the previous discussion relates to boundaries of the
\emph{surface} and the treatment of boundary conditions imposed
thereat, not to the boundary of the narrow band of points (e.g., in
Figure~\ref{fig:egg}) where no artificial boundary condition need be
applied \cite{Ruuth/Merriman:jcp08:CPM, cbm:icpm}.

\section{Numerical Results}
\label{sec:numerical_results}

We present a series of numerical examples to demonstrate the
effectiveness of our approach.

\subsection{Numerical convergence studies}

We consider the egg-shaped curve in Figure~\ref{fig:egg} with
arclength $2\pi$ as a test case.  The Laplace--Beltrami eigenfunctions
and eigenvalues in this case are the same as those of $u''=\lambda u$,
where $u$ is $2\pi$ periodic, namely $u=\cos\left(n s + \beta\right)$
and $\lambda = n^2$ for $n \in \mathbb{Z} \ge 0$ where $s$ represents
arclength and $\beta$ is a phase shift.  The closest point function
for this curve was determined using a numerical optimization procedure
based on Newton's method.

Figure~\ref{fig:egg_ew_conv_study} shows convergence studies in $\dx$
for the first few eigenvalues on the egg-shaped domain.  For
second-order finite differences and degree-three interpolation we
observe second-order convergence.  That is, the Closest Point Method
approximates the eigenvalues with error $\bigoh{\dx^2}$.  Note that
the error increases (and this is true even if one measures relative
error) for the larger eigenvalues, but still shows second-order
convergence.  Figure~\ref{fig:egg_ew_conv_study} also shows that using
fourth-order finite differences and degree-five interpolation, we get
fourth-order accurate approximation to the eigenvalues.

\begin{figure}
  \centering{
    \subfloat[2nd-order FD, $p=3$ interp.]{\includegraphics[width=0.45\textwidth]{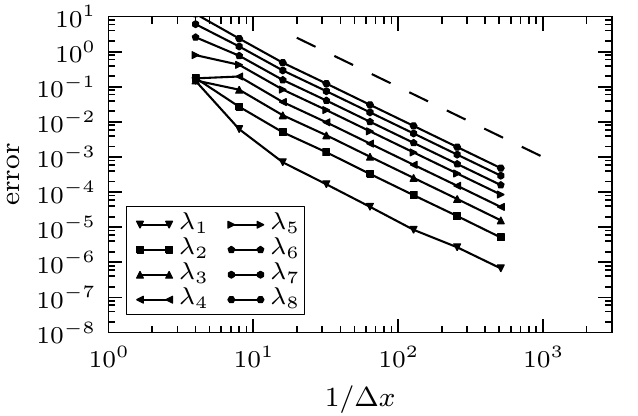}}
    \subfloat[4th-order FD, $p=5$ interp.]{\includegraphics[width=0.45\textwidth]{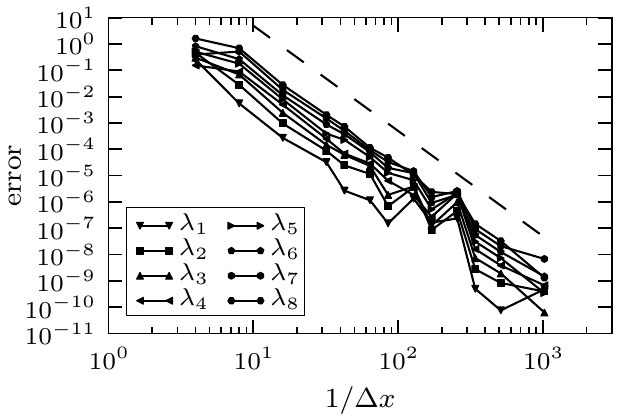}}
  }
  \caption{Numerical convergence studies for eight of the
    Laplace--Beltrami eigenvalues ($\lambda_n=n^2$) of a closed
    egg-shaped curve in 2D.  The dashed reference lines have slope two
    (left) and four (right).  Note second-order accuracy using
    second-order finite differences with degree $p=3$ interpolation
    (left) and fourth-order accuracy using fourth-order finite
    differences with degree $p=5$ interpolation (right).  The lack of
    smoothness in the fourth-order results plot may be because the
    underlying spline curve itself is only $C^2$ smooth; results on a
    circle (not included) are smoother.}
  \label{fig:egg_ew_conv_study}
\end{figure}

In further numerical tests (not included), we found that degree $p=2$
interpolation with second-order finite differences, and degree $p=4$
interpolation with fourth-order finite differences, give approximately
second- and fourth-order convergence, respectively.  These are better
than expected by one order of accuracy, as originally observed in
\cite{cbm:icpm}.

\subsubsection{Boundary Conditions}

In this section we verify using convergence studies that our treatment
of boundary conditions introduced in Section~\ref{sec:bc} achieves the
expected order of accuracy.  For our tests, we use the curve shown in
Figure~\ref{fig:curve_w_ends} parameterized as $(x(t),y(t)) = (t,\cos
t)$ for $t \in [1/4, 4]$.  We apply homogeneous Neumann and Dirichlet
boundary conditions to the ends.  Again, the exact eigenvalues and
eigenfunctions can be determined analytically by considering the
corresponding problem on an interval.  For the former, the arclength
of the curve is required and can be determined in terms of elliptic
integrals.

\begin{figure}
  \centering{
    \subfloat[2nd-order FD, $p=3$ interp.]{%
      \includegraphics[width=0.48\textwidth]{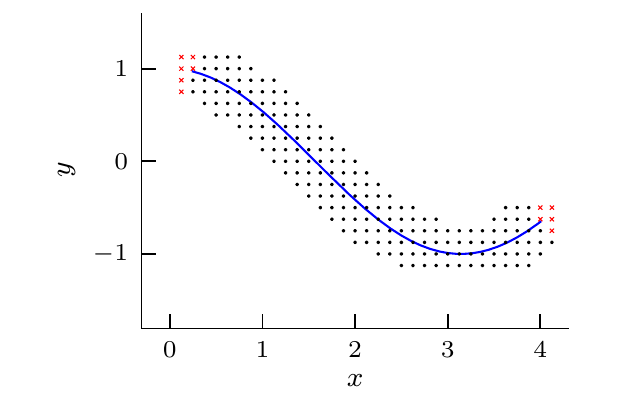}}
    \subfloat[4th-order FD, $p=5$ interp.]{%
      \includegraphics[width=0.48\textwidth]{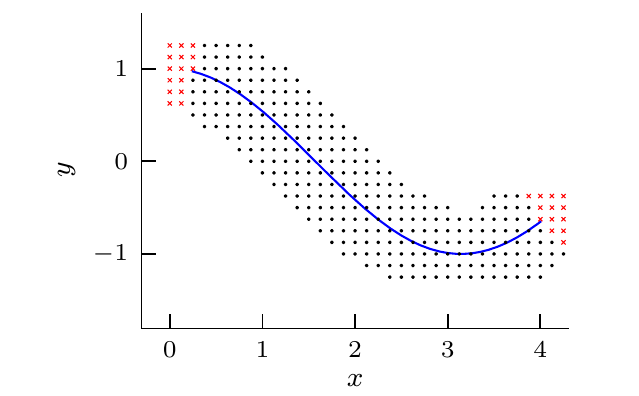}}
  }
  \caption{A curve and the corresponding computational grids for the
    Closest Point Method with second-order (left) and fourth-order
    (right) finite difference schemes with $\dx=0.125$.  Grid points
    marked with small crosses are the ghost points involved in
    implementing boundary conditions.}
  \label{fig:curve_w_ends}
\end{figure}

The original Closest Point Method, which does not explicitly treat
boundaries, gives a first-order approximation to homogeneous boundary
conditions \cite{Ruuth/Merriman:jcp08:CPM}.
See, for example, Figure~\ref{fig:curve_ev_neumann_conv_study} where
it is observed that this trivial treatment of the boundaries limits
the overall accuracy in the eigenvalues to first-order.
Figure~\ref{fig:curve_ev_neumann_conv_study} also gives results using
the new ``$\bar{\cp}$'' approach for Neumann boundary conditions
described in Section~\ref{sec:bc}; we find that this method attains
the second-order accuracy of the underlying Cartesian finite
difference scheme.  Note that the $\bar{\cp}$ approach is also
effective for Dirichlet boundary conditions.  For example, in
Figure~\ref{fig:curve_ev_dirichlet_conv_study} we find second-order
results for homogeneous Dirichlet boundary conditions using
$-u(\bar{\cp}(\x_g)$ for ghost points (as in Section~\ref{sec:bc}).

The $\bar{\cp}$ approach is designed to give second-order
approximations to boundary conditions.  Thus it is expected, and
observed in Figure~\ref{fig:curve_ev_dirichlet_conv_study}, that even
with higher-order finite difference schemes and high-degree
interpolation, the results will generally be second-order accurate in
the presence of boundary conditions.  Third- and higher-order
approximation of boundary conditions may also be contemplated; while
we do not investigate such methods here we note that approximations of
this type will require a replacement for $\bar{\cp}$ that incorporates
the curvature of the surface near the boundary.

\begin{figure}
  \centering{%
    \includegraphics[width=0.45\textwidth]{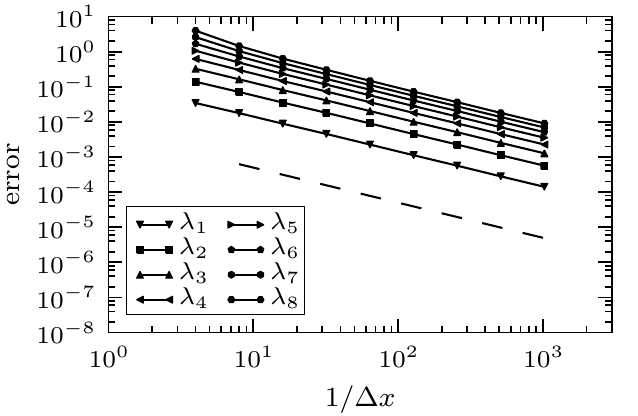}%
    \quad%
    \includegraphics[width=0.45\textwidth]{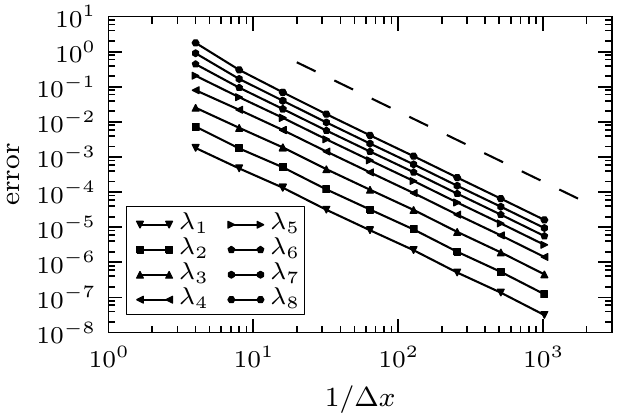}%
  }
  \caption{Numerical convergence studies for the first eight non-zero
    Laplace--Beltrami eigenvalues of a curve in 2D with Neumann
    boundary conditions applied at both ends.  The left figure uses
    the original closest point function to impose a homogeneous
    Neumann boundary condition, and we note the results are only
    first-order accuracy (dashed reference line has slope one).  The
    right figure uses the modified ``$\bar{\cp}$'' approach and
    exhibits second-order accuracy (dashed reference line has slope
    two).  Both computations use second-order finite differences and
    degree $p=3$ interpolation.}
  \label{fig:curve_ev_neumann_conv_study}
\end{figure}

\begin{figure}
  \centering{%
    \includegraphics[width=0.45\textwidth]{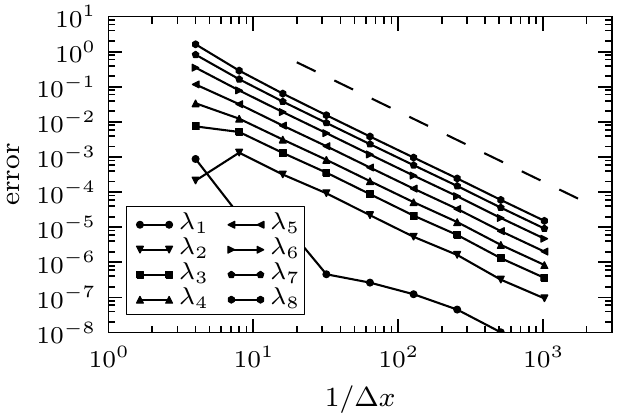}%
    \quad%
    \includegraphics[width=0.45\textwidth]{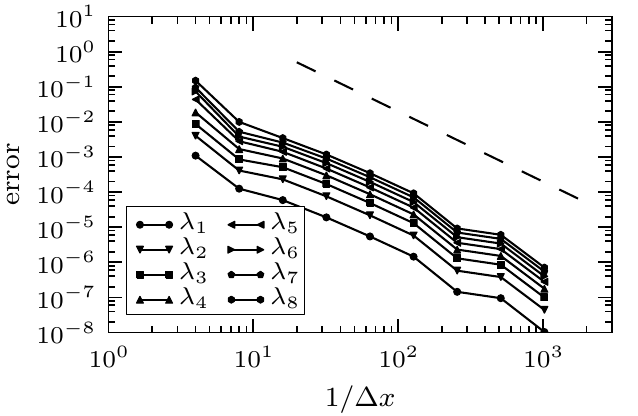}%
  }
  \caption{Numerical convergence study for the first few eigenvalues
    on a curve with Dirichlet boundary conditions applied at both
    ends.  The boundaries are dealt with using the ``$\bar{\cp}$'' and
    $-u$ discretization with degree $p$ interpolation.  The left
    figure uses second-order finite differences (and degree $p=3$
    interpolation) and achieves second-order accuracy (the dashed
    reference lines have slope two).  The right figure uses
    fourth-order finite differences (and degree $p=5$ interpolation)
    but the overall accuracy is limited by the second-order treatment
    of the boundary conditions.}
  \label{fig:curve_ev_dirichlet_conv_study}
\end{figure}

\subsubsection{Conditioning}
\label{sec:conditioning}

Table~\ref{tab:cond} shows that the condition number of the matrices
used in our computations scales like $\bigoh{\frac{1}{\dx^2}}$.  Thus
the conditioning is the same as for standard Cartesian finite
difference schemes.  The ``$\bar{\cp}$'' treatment of boundary
conditions does not have a significant effect on conditioning.

\begin{SCtable}
  \caption{Condition numbers (2-norm) for the matrix $\DEstab$.
    Tested for the Laplace--Beltrami operator on a open curve embedded
    in 2D with Dirichlet boundary conditions at both ends, using
    second-order finite differences, degree $p=3$ interpolation, and
    the ``$\bar{\cp}$'' treatment of the boundary conditions.}
  \label{tab:cond}
  \centering
  \begin{tabular}{ccl}
    \hline
    $\dx$ & size of $\DEstab$ & $\kappa(\DEstab)$ \\
    \hline
    0.25      & 76   $\times$ 76   & 289    \\
    0.125     & 140  $\times$ 140  & 1154   \\
    0.0625    & 268  $\times$ 268  & 4608   \\
    0.03125   & 524  $\times$ 524  & 19304  \\
    0.015625  & 1036 $\times$ 1036 & 75543  \\
    0.0078125 & 2060 $\times$ 2060 & 326633 \\
    \hline
  \end{tabular}
\end{SCtable}

\subsubsection{Behavior for high-frequency modes}
\label{sec:high_freq_ews}

The large number of spurious eigenvalues near $\frac{2d}{\Delta x^2}$
in Figure~\ref{fig:high_freq_ews} reflect the singular behavior of
Equation~\eqref{embeddedEigenfunc} at $\lambda = \frac{2d}{\Delta
  x^2}$ (c.f., Figure~\ref{fig:spectrum_hist_stab_unstab} where the
problem instead occurs near the origin).  Notably, it is these
spurious eigenvalues which possess nonzero imaginary components,
reflecting the fact that the eigenvalue problem \eqref{eq:LBevstab} is
not self-adjoint.  Because these spurious eigenvalues correspond to
highly oscillatory eigenfunctions which are quite close to the Nyquist
frequency (i.e., with eigenvalues near $\frac{\pi^2}{\Delta x^2}$), it
is not surprising that numerical difficulties arise for such modes.
Nonetheless, any particular higher-frequency modes may be resolved by
appropriately refining~$\Delta x$.

\begin{figure}
  \centerline{%
    \includegraphics[width=0.32\textwidth]{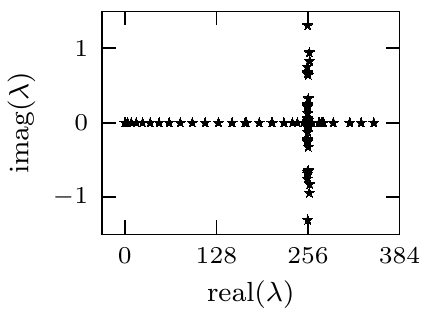}%
    \includegraphics[width=0.32\textwidth]{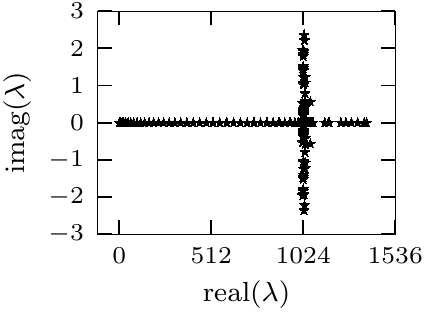}%
    \includegraphics[width=0.32\textwidth]{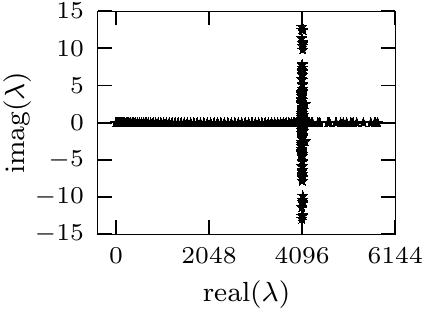}%
  }
  \caption{Computed spectra of the Laplace--Beltrami operator on a
    unit circle using $\dx=\frac{1}{8}, \frac{1}{16}$, and
    $\frac{1}{32}$ (from left-to-right).  The method uses second-order
    finite differences and degree $p=3$ interpolation.  Note a large
    number of extraneous complex eigenvalues near $\frac{4}{\Delta
      x^2}$ (i.e., 256, 1024, and 4096): these correspond to the
    singularity in \eqref{embeddedEigenfunc} and can be controlled by
    further resolving $\Delta x$.}
  \label{fig:high_freq_ews}
\end{figure}

\subsection{Eigenfunction computations}

The following computations were  performed in Matlab and SciPy \cite{scipy}.
Visualizations were carried out with Matlab, VTK \cite{vtk1998} and MayaVi
\cite{mayavi2}.

\subsubsection{Hemispherical harmonics}
Figure~\ref{fig:hemisphere} shows several Laplace--Beltrami eigenmodes
of a unit hemisphere with homogeneous Neumann boundary conditions on
the equator.

\begin{figure}
  \parbox{0.48\textwidth}{%
    \centerline{%
      \includegraphics[width=0.12\textwidth]{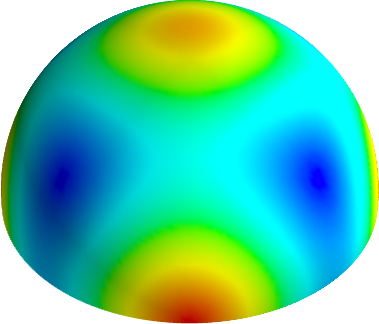}
      \includegraphics[width=0.12\textwidth]{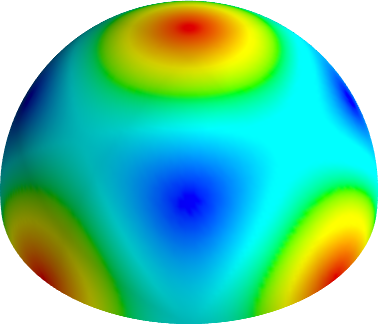}
      \includegraphics[width=0.12\textwidth]{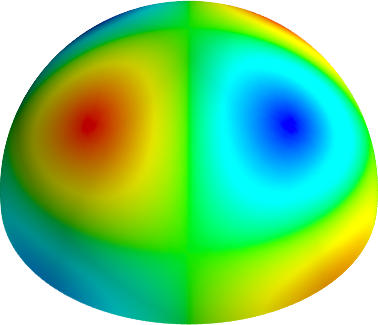}}
    \centerline{%
      \includegraphics[width=0.12\textwidth]{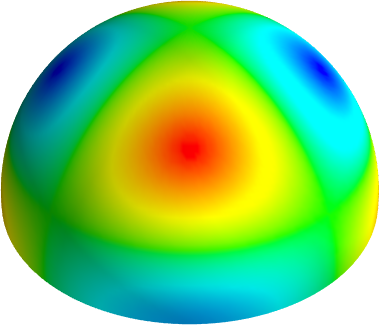}
      \includegraphics[width=0.12\textwidth]{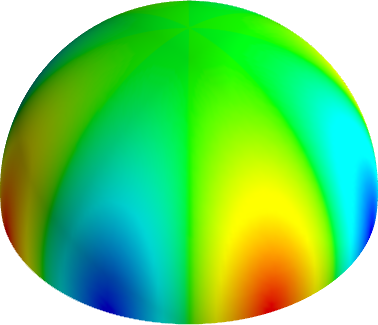}}}
  \parbox{0.48\textwidth}{%
    \includegraphics[width=0.45\textwidth]{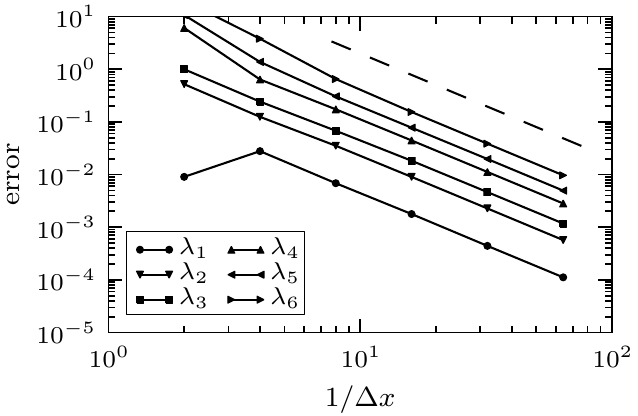}}%
  \caption{The left image shows several hemispherical harmonics with
    eigenvalue 20 computed
    with second-order finite differences, degree $p=3$ interpolation,
    and $\Delta x = \frac{1}{64}$.  The right image is a convergence study
    for the first six eigenvalues ($\lambda_n = n(n+1)$), also using
    second-order finite differences and degree $p=3$ interpolation
    (the dashed reference line has slope two).}
  \label{fig:hemisphere}
\end{figure}

\subsubsection{Eigenkaninchen}
Figure~\ref{fig:bunny} shows Laplace--Beltrami eigenmodes of the
surface of the Stanford Bunny \cite{StanfordBunny}.  The closest point
function for this and other triangulated surfaces can be computed in
an efficient, straightforward manner \cite{cbm:lscpm}.
\begin{figure}
  \centerline{%
    \rule[-0.5ex]{0pt}{1ex}%
    \includegraphics[width=0.195\textwidth]{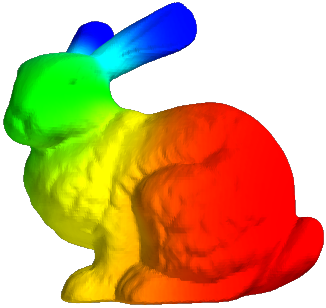}%
    \includegraphics[width=0.195\textwidth]{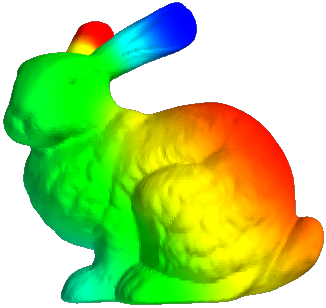}%
    \includegraphics[width=0.195\textwidth]{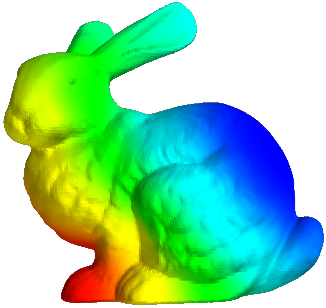}%
    \includegraphics[width=0.195\textwidth]{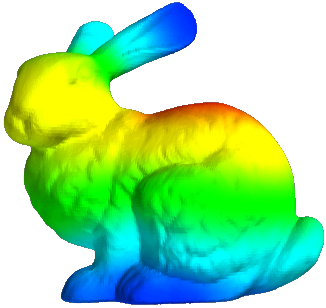}%
    \includegraphics[width=0.195\textwidth]{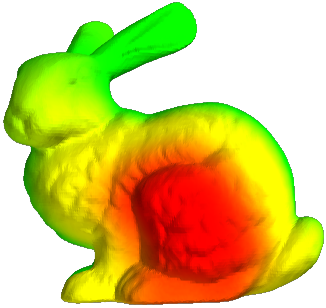}}%
  \centerline{%
    \includegraphics[width=0.195\textwidth]{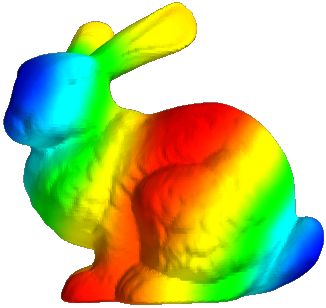}%
    \includegraphics[width=0.195\textwidth]{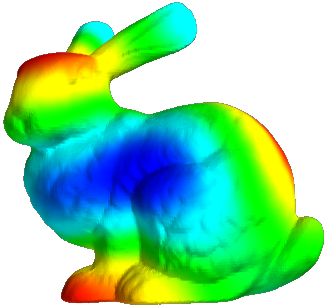}%
    \includegraphics[width=0.195\textwidth]{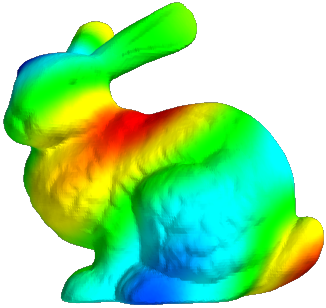}%
    \includegraphics[width=0.195\textwidth]{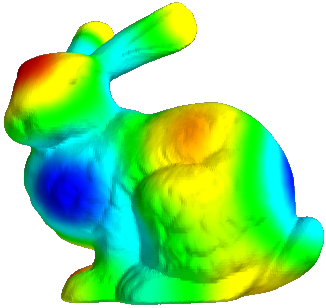}%
    \includegraphics[width=0.195\textwidth]{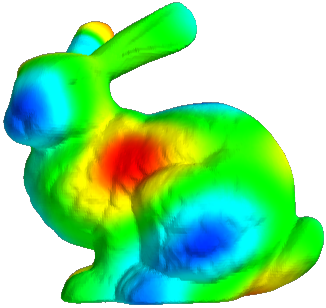}}%
  \caption{A selection of eigenkaninchen: Laplace--Beltrami eigenmodes
    on the surface of the Stanford Bunny \cite{StanfordBunny}.
    The results are computed via the Closest Point Method using second-order finite
    differences, degree $p=3$ interpolation, and $\Delta x=0.1$, where
    the bunny is roughly two units long. }
  \label{fig:bunny}
\end{figure}

\subsubsection{M\"obius strip}
Figure~\ref{fig:mobius} shows some Laplace--Beltrami eigenmodes of a
M\"obius strip.  Dirichlet boundary conditions are applied at the
boundary of the M\"obius strip.  The surface is embedded in $\Real^3$
and the closest point function for each grid point is computed from a
parameterization using a numerical optimization procedure minimizing
the square of the distance function.  Note that the non-orientable
nature of the M\"obius strip poses no difficultly for the Closest
Point Method.

\begin{SCfigure}
  \newcommand{\Lshfigsize}{1in}%
  \parbox{3.2in}{%
    \centerline{%
      \rule[-2.5pt]{0pt}{1ex}%
      \includegraphics[width=\Lshfigsize]{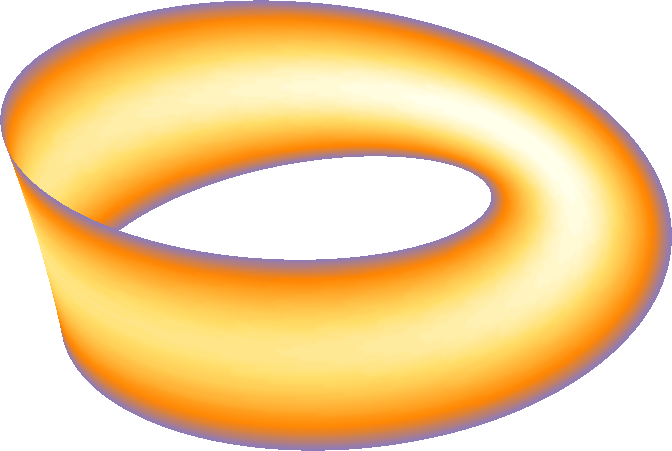}
      \includegraphics[width=\Lshfigsize]{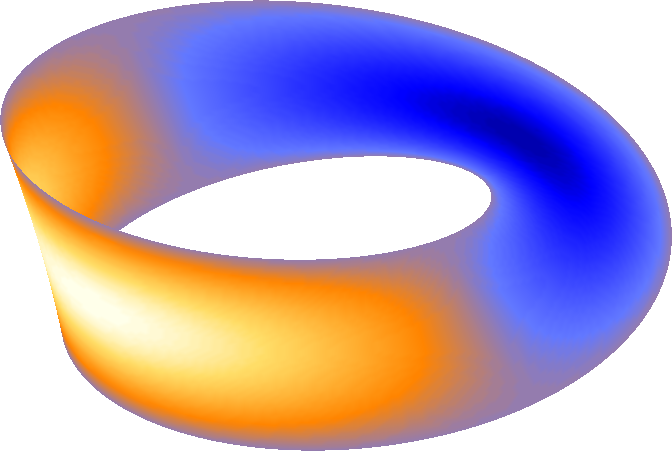}
      \includegraphics[width=\Lshfigsize]{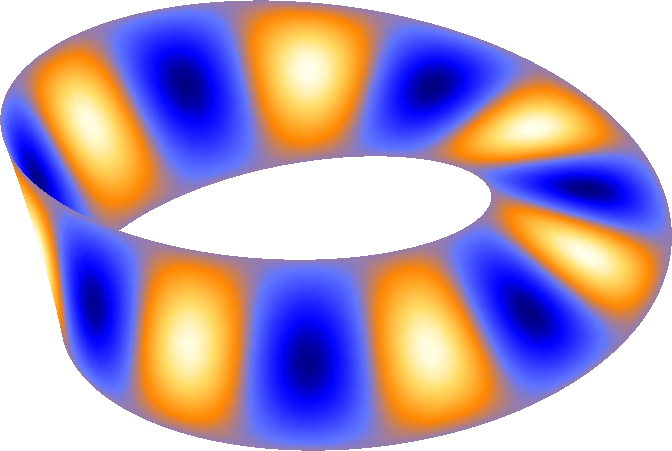}}%
    \centerline{%
      \rule[-2.5pt]{0pt}{1ex}%
      \includegraphics[width=\Lshfigsize]{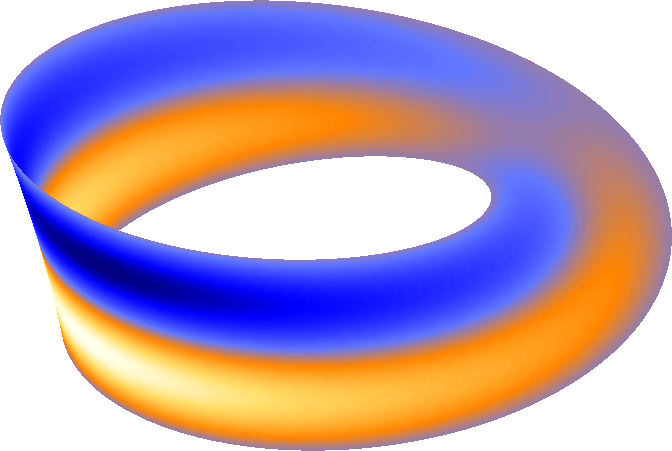}
      \includegraphics[width=\Lshfigsize]{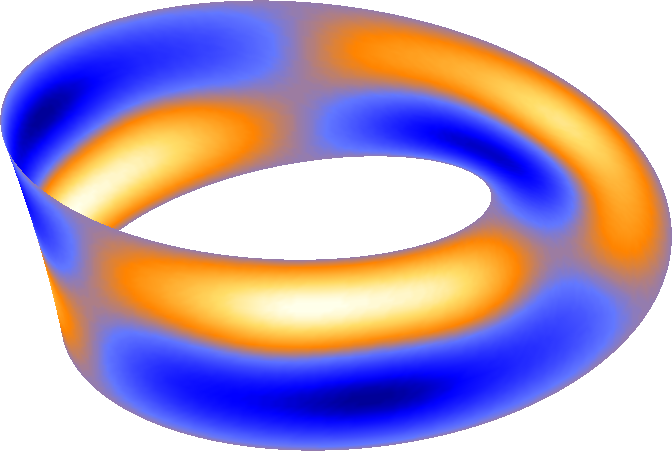}
      \includegraphics[width=\Lshfigsize]{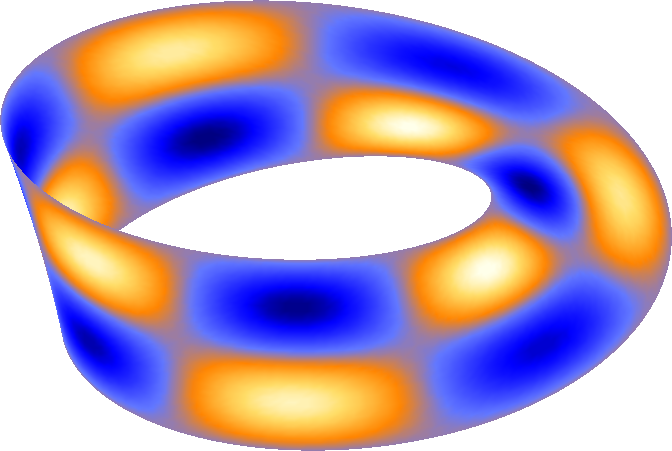}}%
    \centerline{%
      \rule[-2.5pt]{0pt}{1ex}%
      \includegraphics[width=\Lshfigsize]{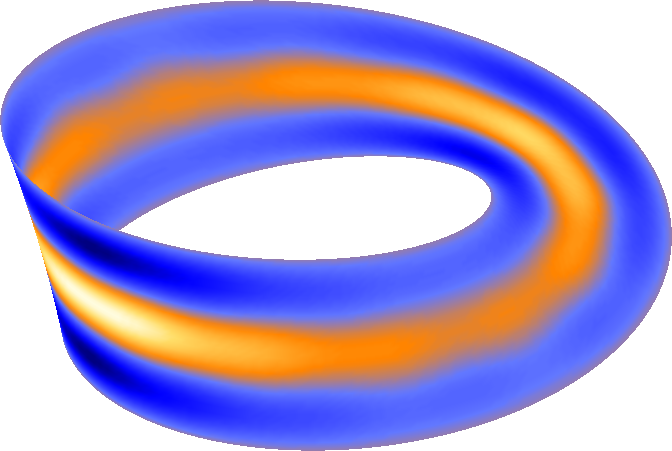}
      \includegraphics[width=\Lshfigsize]{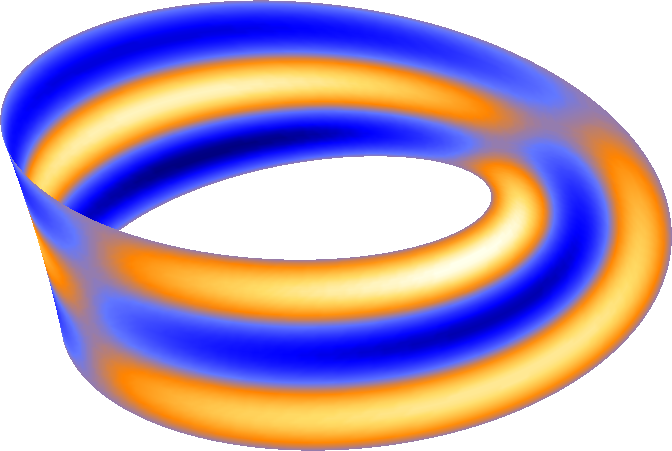}
      \includegraphics[width=\Lshfigsize]{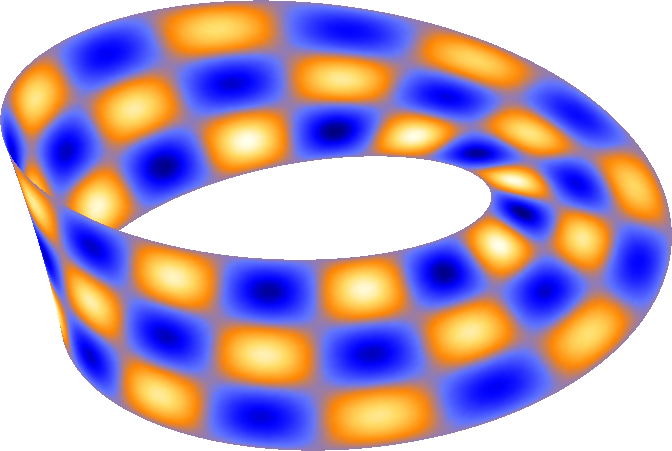}}%
  }
\caption{A selection of Laplace--Beltrami eigenmodes of a M\"obius
  strip computed with the Closest Point Method.  Calculations use
  second-order finite differences, $p=3$, $\Delta x=0.1$, and the
  M\"obius strip is about 2 units in ``diameter''.}
  \label{fig:mobius}
\end{SCfigure}

\subsubsection{L-shaped Domain}

The Closest Point Method works on surfaces of various codimension
\cite{Ruuth/Merriman:jcp08:CPM, cbm:icpm} and indeed solid shapes in
$\Real^2$ or $\Real^3$ are surfaces of codimension-zero.
Figure~\ref{fig:Lshape} shows an eigenmode computation on an L-shaped
domain, where zero Dirichlet boundary conditions are imposed using the
``$\bar{\cp}$'' technique described in Section~\ref{sec:bc}.  In the interior of a
solid, $\cp(\x)=\x$ and so no interpolations are needed.
Furthermore, for a grid-aligned L-shaped domain, for any $\x_i$ outside the domain,
$\bar{\cp}(\x_i)$ turns out to be another grid point $\x_j$ (located
inside the domain as if the perimeter were a mirror) so no
interpolation step is necessary.  Thus in this special ``corner
case'', the Closest Point Method reduces to a standard finite
difference computation using ghost points to mirror the values along
the perimeter.  Interestingly, these reductions happen
automatically: no change in the code is needed.

\begin{SCfigure}
  \newcommand{\Lshfigsize}{0.52in}%
  \parbox{2.9in}{%
    \centerline{%
      \rule[-2.5pt]{0pt}{1ex}%
      \includegraphics[width=\Lshfigsize]{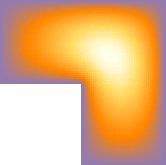}
      \includegraphics[width=\Lshfigsize]{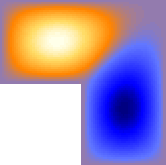}
      \includegraphics[width=\Lshfigsize]{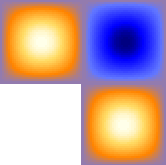}
      \includegraphics[width=\Lshfigsize]{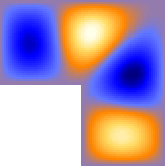}
      \includegraphics[width=\Lshfigsize]{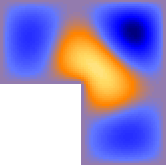}}%
    \centerline{%
      \rule[-2.5pt]{0pt}{1ex}%
      \includegraphics[width=\Lshfigsize]{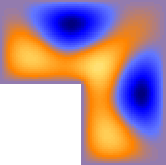}
      \includegraphics[width=\Lshfigsize]{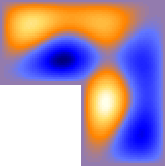}
      \includegraphics[width=\Lshfigsize]{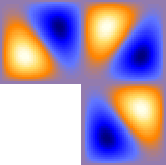}
      \includegraphics[width=\Lshfigsize]{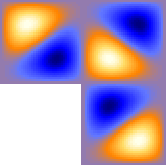}
      \includegraphics[width=\Lshfigsize]{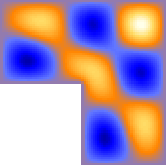}}%
    \centerline{%
      \rule[-2.5pt]{0pt}{1ex}%
      \includegraphics[width=\Lshfigsize]{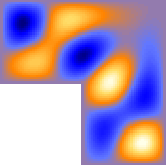}
      \includegraphics[width=\Lshfigsize]{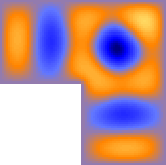}
      \includegraphics[width=\Lshfigsize]{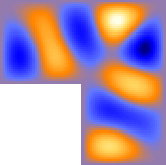}
      \includegraphics[width=\Lshfigsize]{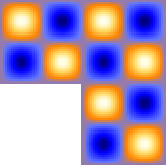}
      \includegraphics[width=\Lshfigsize]{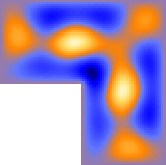}}%
  }
  \caption{The first 15 Laplacian eigenmodes of an L-shaped domain
    with Dirichlet boundary conditions.  The results are computed
    using the Closest Point Method with second-order finite
    differences using $\dx = 1/40$ (the domain is two units wide).}
  \label{fig:Lshape}
\end{SCfigure}

\subsubsection{Continental Africa}
Some Laplace--Beltrami eigenmodes of continental Africa are shown in
Figure~\ref{fig:africa}.  The results are computed directly on the
surface of the Earth (assumed to be a sphere).  Homogeneous Dirichlet
boundary conditions are applied to the coastline.  Finding the closest
point function involves projecting onto a bitmapped image of the Earth
\cite{NASA:visible_earth} where the continent was first manually
segmented.  It is interesting to note that these eigenmodes match very
closely the first ten eigenmodes of the L-shaped domain in
Figure~\ref{fig:Lshape}.

\begin{figure}
  \newcommand{\figsize}{0.19\textwidth}
  \centerline{%
    \rule[-2.5pt]{0pt}{1ex}%
    \includegraphics[width=\figsize]{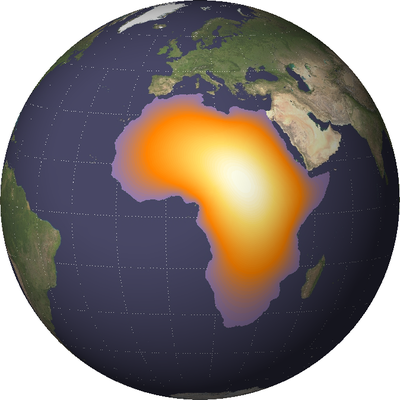}
    \includegraphics[width=\figsize]{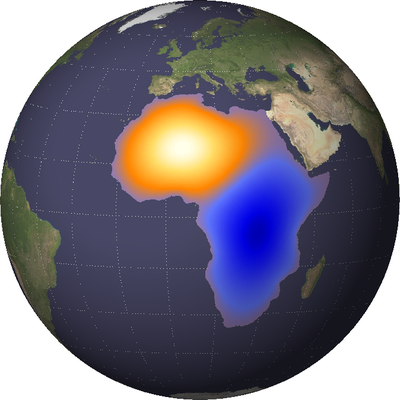}
    \includegraphics[width=\figsize]{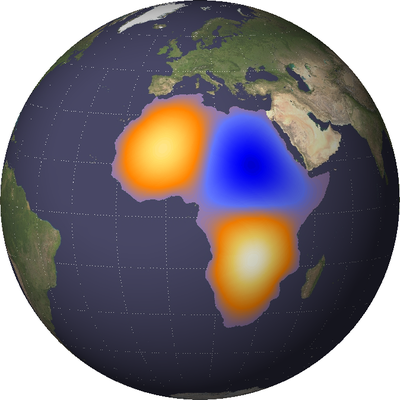}
    \includegraphics[width=\figsize]{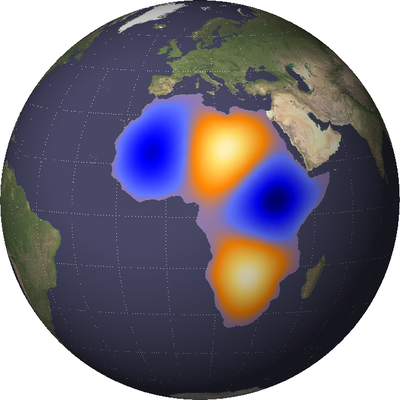}
    \includegraphics[width=\figsize]{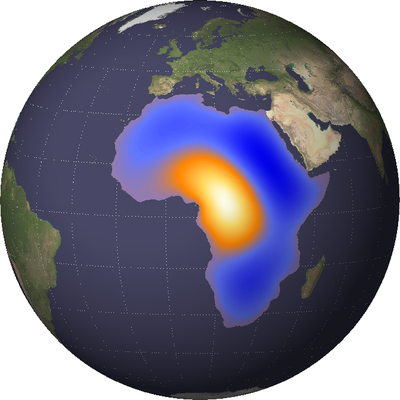}}%
  \centerline{%
    \rule[-2.5pt]{0pt}{1ex}%
    \includegraphics[width=\figsize]{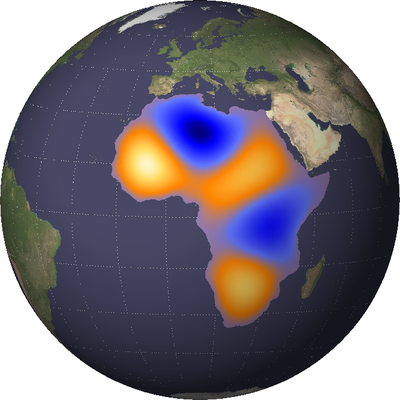}
    \includegraphics[width=\figsize]{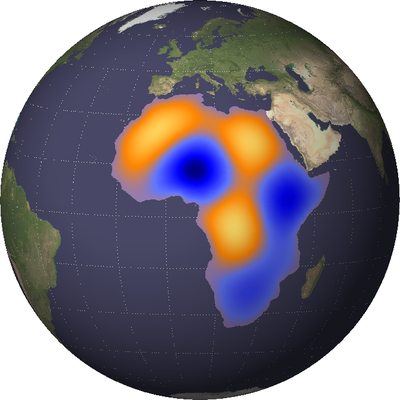}
    \includegraphics[width=\figsize]{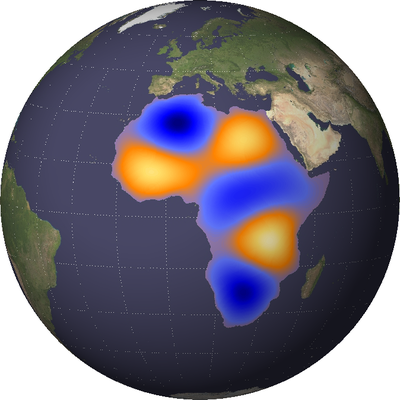}
    \includegraphics[width=\figsize]{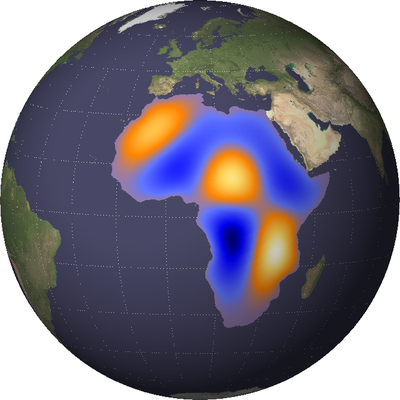}
    \includegraphics[width=\figsize]{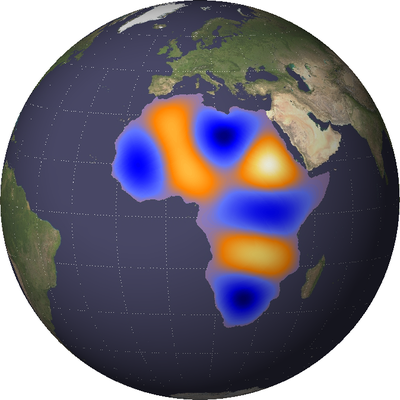}}%
  \caption{First ten eigenmodes of continental Africa.  The Earth has
    unit diameter and the computation uses second-order finite
    differences with $\Delta x=1/40$ and degree $p=3$ interpolation.}
  \label{fig:africa}
\end{figure}

\section{Summary and Conclusions}
\label{sec:conclusions}
Through a series of convergence studies and computational examples, we
have shown that the Closest Point Method is an effective method for
computing the spectra and eigenfunctions of the Laplace--Beltrami
operator on rather general surfaces.  The basis of our approach is the
embedded eigenvalue problem \eqref{eq:LBevstab} which, when
discretized using standard, centered finite difference methods and
Lagrange interpolation, yields a nonsymmetric matrix eigenvalue
problem which can be solved using standard software.  Fortunately,
this lack of symmetry is not a concern in many practical situations
since only the highest frequency modes have a significant imaginary
component.  We are currently investigating a finite element Closest
Point Method which would lead to symmetric matrices.

For eigenvalue problems on open surfaces with Dirichlet or homogeneous
Neumann boundary conditions, we have introduced second-order accurate
schemes.  These methods require only a simple change to the
closest point extension and are straightforward to implement.

Although we have focused on the Laplace--Beltrami operator in this
work, the Closest Point Method is applicable to many other surface
differential operators
\cite{Ruuth/Merriman:jcp08:CPM,cbm:lscpm,cbm:phd,cbm:icpm}.  This
suggests that the approach presented here may be applicable to a
larger class of eigenvalue problems.

Finally, the techniques developed here are quite general and are
applicable beyond surface eigenvalue problems.  For example, future
work will include solving surface Poisson problems of the form
$-\lapsurf u = f$.

\subsection*{Acknowledgments}
The authors thank Bin Dong (UCSD), who motivated this work by
asking if the Closest Point Method could be used for surface
eigenvalue calculations.

\appendix

\section{Additional theorems}
\label{app:unst_case_thms}
As mentioned in Section~\ref{sec:illposed_ev_prob}, every solution to the
ill-posed embedding problem \eqref{eq:evCP}, restricted to the
surface, is a solution to the surface eigenvalue problem.  Conversely,
except for the $\lambda = 0$ case, every surface eigenfunction
corresponds to a unique solution of the embedding problem.  These
results are established by the following theorems.
\begin{mytheorem}  \label{th:evCPimpliesLBev}
  Suppose $v(\x)$ and $\lambda$ are a solution to the embedding
  eigenvalue problem \eqref{eq:evCP} and $\surf$ is a smooth surface.  Then $u:\surf \to \Real$
  defined by $u(\x) = v(\x)$ for $\x \in \surf$ is an eigenfunction of
  \eqref{eq:LBev} with eigenvalue $\lambda$.
\end{mytheorem}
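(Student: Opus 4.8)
The plan is to derive the claim directly from the closest point principle, Theorem~\ref{cp_principle1}. The single observation that makes everything work is that for every $\x\in\Omega$ the point $\cp(\x)$ lies on $\surf$, so the hypothesis $u(\x)=v(\x)$ for $\x\in\surf$ upgrades at once to an identity of \emph{functions} on all of $\Omega$: $u(\cp(\x))=v(\cp(\x))$ for every $\x\in\Omega$. In other words the closest point extension of $u$ and the composite $v\circ\cp$ coincide as functions on the neighborhood, not merely on the surface, and therefore their Laplacians coincide wherever these are defined.

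Concretely, I would proceed in three short steps. First, record the regularity bookkeeping: the hypothesis that $(v,\lambda)$ solves \eqref{eq:evCP} presupposes that $v\circ\cp$ is twice differentiable in a neighborhood of $\surf$, so $u=v|_{\surf}$ is smooth on $\surf$ and Theorem~\ref{cp_principle1} applies to it. Second, differentiate the function identity $u(\cp(\x))=v(\cp(\x))$ twice to get $\lap\big(u(\cp(\x))\big)=\lap\big(v(\cp(\x))\big)$ on $\Omega$, in particular at points of $\surf$. Third, evaluate at an arbitrary $\x\in\surf$: Theorem~\ref{cp_principle1} gives $\lapsurf u(\x)=\lap\big(u(\cp(\x))\big)=\lap\big(v(\cp(\x))\big)$, which by \eqref{eq:evCP} equals $-\lambda v(\x)=-\lambda u(\x)$ since $\x\in\surf$. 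Chaining these equalities yields $-\lapsurf u(\x)=\lambda u(\x)$ for every $\x\in\surf$, which is exactly \eqref{eq:LBev} with eigenvalue $\lambda$.

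The argument is essentially immediate, so the "main obstacle" is not a technical difficulty but a matter of stating the hypotheses precisely. Two points deserve attention. First, the smoothness of $v\circ\cp$ must be assumed strongly enough that $\lap(v\circ\cp)$ is meaningful and that Theorem~\ref{cp_principle1} genuinely applies to $u$; this is built into what it means for $(v,\lambda)$ to be a solution of \eqref{eq:evCP}. Second, if one insists that an eigenfunction be nonzero, one should add a remark that for $\lambda\neq 0$ a nonzero $v$ cannot vanish identically on $\surf$ (otherwise $v\circ\cp\equiv 0$, forcing $\lambda v\equiv 0$ and hence $v\equiv 0$), whereas for $\lambda=0$ the restriction may genuinely be trivial — precisely the degeneracy flagged in Section~\ref{sec:nulleigenspace}, and the reason the converse statement must exclude $\lambda=0$.
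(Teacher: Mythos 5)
Your argument is correct and is precisely the one the paper intends: the paper's proof is simply ``a direct consequence of Theorem~\ref{cp_principle1},'' and your three steps (noting $u\circ\cp = v\circ\cp$ on all of $\Omega$, taking Laplacians, and evaluating on $\surf$ via Theorem~\ref{cp_principle1}) fill in exactly that reasoning. Your closing remarks on regularity and on the $\lambda=0$ degeneracy are consistent with the paper's discussion in Section~\ref{sec:nulleigenspace}.
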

\begin{proof}
This is a direct consequence of Theorem~\ref{cp_principle1}.
\end{proof}

\begin{mytheorem} \label{th:LBevimplesevCP}
  Let $\surf$ be a smooth surface.
  For every non-constant solution $u$ and $\lambda$ of
  \eqref{eq:LBev}, there exists a unique (up to a multiplicative
  constant) eigenfunction $v$ of \eqref{eq:evCP} with eigenvalue
  $\lambda$ which agrees with $u$ on $\surf$.  This eigenfunction is
  given by
    $v(\x) = -\frac{1}{\lambda}\lap(u(\cp(\x))).$
\end{mytheorem}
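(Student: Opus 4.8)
The plan is to exploit the one structural fact that makes a closest point extension special: the value $v(\cp(\x))$ at any point $\x$ depends only on the restriction $v|_{\surf}$. Hence, once we demand that $v$ agree with the given surface eigenfunction $u$ on $\surf$, we automatically have $v(\cp(\x)) = u(\cp(\x))$ for every $\x$, so the left-hand side of \eqref{eq:evCP} is entirely determined by $u$ alone. Equation \eqref{eq:evCP} then does not constrain $v$ so much as \emph{define} it off the surface, and reading it as such yields existence and uniqueness simultaneously --- provided $\lambda \neq 0$, which is exactly where the non-constancy of $u$ enters (on a connected smooth closed surface, $-\lapsurf u = 0$ forces $u$ to be constant).

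First I would dispose of the degenerate case: since $u$ is non-constant, $\lambda \neq 0$, so division by $\lambda$ is legitimate. Then I would define $v : \Omega \to \Real$ by the stated formula $v(\x) := -\tfrac{1}{\lambda}\lap\big(u(\cp(\x))\big)$. The only geometric input needed is that $\cp$ is smooth on a tubular neighborhood of the smooth surface $\surf$; shrinking $\Omega$ to such a neighborhood if necessary, $u \circ \cp$ is then twice differentiable and $v$ is well-defined.

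Next I would verify the two claimed properties. Agreement on $\surf$: for $\x \in \surf$ we have $\cp(\x) = \x$, and Theorem~\ref{cp_principle1} gives $\lap(u(\cp(\x))) = \lapsurf u(\x) = -\lambda u(\x)$ by \eqref{eq:LBev}, whence $v(\x) = -\tfrac{1}{\lambda}(-\lambda u(\x)) = u(\x)$; in particular $v \not\equiv 0$ because $u$ is non-constant. The eigenvalue equation: since $v$ and $u$ coincide on $\surf$, their closest point extensions coincide, $v(\cp(\x)) = u(\cp(\x))$ for all $\x \in \Omega$, so $-\lap(v(\cp(\x))) = -\lap(u(\cp(\x))) = \lambda v(\x)$ directly from the definition of $v$. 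Thus $(v,\lambda)$ solves \eqref{eq:evCP}.

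For uniqueness, let $w$ be any solution of \eqref{eq:evCP} with eigenvalue $\lambda$ that agrees with $u$ on $\surf$. Then, as above, $w(\cp(\x)) = u(\cp(\x))$, and substituting into \eqref{eq:evCP} gives $\lambda w(\x) = -\lap(u(\cp(\x)))$; since $\lambda \neq 0$ this forces $w = v$. Hence $v$ is the unique such eigenfunction, and the ``up to a multiplicative constant'' clause is automatic once agreement with $u$ is imposed. I do not anticipate a serious obstacle here: the entire argument rests on the ``extension depends only on surface values'' principle plus Theorem~\ref{cp_principle1}, and the only point requiring care is the domain bookkeeping --- guaranteeing that $\lap(u(\cp(\cdot)))$ is meaningful, which is precisely why $\Omega$ is taken to be a neighborhood on which $\cp$ is smooth.
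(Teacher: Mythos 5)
Your proposal is correct and follows the same route as the paper's (much terser) proof: since $v$ must agree with $u$ on $\surf$, the quantity $v(\cp(\x)) = u(\cp(\x))$ is already determined, so \eqref{eq:evCP} can simply be solved for $v(\x)$ off the surface, which yields existence, the explicit formula, and uniqueness all at once (using $\lambda \neq 0$). Your additional checks --- consistency on $\surf$ via Theorem~\ref{cp_principle1} and the remark on where non-constancy of $u$ guarantees $\lambda \neq 0$ --- are welcome elaborations of details the paper leaves implicit, not a different argument.
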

\begin{proof}
  Using our hypothesis that $v$ agrees with $u$ on $\surf$, we may
  solve for $v$ in \eqref{eq:evCP} to obtain the result.
\end{proof}


\bibliographystyle{elsarticle-num}
\bibliography{cbm,ruuth}

\end{document}